\documentclass[12pt]{amsart}  
 
\usepackage{amsmath,amssymb,amsfonts,amscd,xcolor,tikz-cd}
\usepackage[hidelinks]{hyperref}
\numberwithin{equation}{section}

\definecolor{commentred}{RGB}{190,0,0}
\definecolor{additionpurple}{RGB}{120,0,160}

\providecommand{\k}{}\renewcommand{\k}{\mathbf{k}}
\providecommand{\C}{}\renewcommand{\C}{\mathcal{C}}
\providecommand{\D}{}\renewcommand{\D}{\mathcal{D}}
\providecommand{\M}{}\renewcommand{\M}{\mathcal{M}}
\providecommand{\Vec}{}\renewcommand{\Vec}{\mathrm{Vec}}
\providecommand{\sVec}{}\renewcommand{\sVec}{\mathrm{sVec}}
\providecommand{\Ver}{}\renewcommand{\Ver}{\mathrm{Ver}}
\providecommand{\Cl}{}\renewcommand{\Cl}{\mathrm{Cl}}
\providecommand{\h}{}\renewcommand{\h}{\mathfrak{h}}

\providecommand{\id}{}\renewcommand{\id}{\operatorname{id}}
\providecommand{\Hom}{}\renewcommand{\Hom}{\operatorname{Hom}}
\providecommand{\End}{}\renewcommand{\End}{\operatorname{End}}
\providecommand{\Pic}{}\renewcommand{\Pic}{\operatorname{Pic}}
\providecommand{\Br}{}\renewcommand{\Br}{\operatorname{Br}}

\theoremstyle{plain}
\newtheorem{theorem}{Theorem}[section]
\newtheorem{lemma}[theorem]{Lemma}
\newtheorem{proposition}[theorem]{Proposition}
\newtheorem{corollary}[theorem]{Corollary}

\theoremstyle{definition}

\newtheorem{example}[theorem]{Example}

\theoremstyle{remark}
\newtheorem{remark}[theorem]{Remark}

\begin{document}

\title{Clifford and Weyl algebras in symmetric tensor categories}

\author{Pavel Etingof}
\address{Department of Mathematics,
Massachusetts Institute of Technology,
77 Massachusetts Avenue,
Cambridge, MA 02139,
USA
}
\email{etingof@math.mit.edu}

\begin{abstract} Let $\C$ be a symmetric tensor category over an algebraically closed field $\k$ of characteristic $\ne 2$. We study Clifford and Weyl algebras of objects of $\C$ with a (skew-)symmetric bilinear form. When the form is non-degenerate, we establish simplicity and the Azumaya property for such algebras under suitable assumptions. We also compute Clifford and Weyl algebras in the Verlinde category $\Ver_p$ and use them to prove that if $\C$ is Frobenius exact then the Weyl algebra of a symplectic object of $\C$ with finite symmetric algebra is Azumaya.  Using this, we introduce the symplectic Witt group $\mathcal S\mathcal W(\C)$, the subgroup of the Brauer group $\Br(\C)$ consisting of Morita classes of such Azumaya algebras, and when $\C={\rm Rep}(G)\boxtimes\sVec$ for a finite group $G$ of order coprime to ${\rm char}(\k)$, express $\mathcal S\mathcal W(\C)$ in terms of second Stiefel-Whitney classes of orthogonal representations of $G$.
\end{abstract}

\maketitle

\tableofcontents

\section{Introduction} 

Clifford and Weyl algebras, which are quantizations of exterior and symmetric algebras respectively, are among the simplest and most important examples of non-commutative algebras. In this paper we develop a theory of Clifford and Weyl algebras in an arbitrary symmetric tensor category $\C$ and establish their basic properties (such as the  correspondence between Clifford and Weyl algebras by parity change and the PBW theorem). We also establish the simplicity of these algebras under suitable assumptions; in particular, we show that Weyl algebras of objects with finite symmetric algebras in Frobenius exact categories are Azumaya algebras. Finally, we define the symplectic Witt group $\mathcal S\mathcal W(\C)$ of $\C$ - the subgroup of the Brauer group $\Br(\C)$ consisting of Morita classes of such Azumaya algebras, 
and describe it explicitly in the case of the category of super-representations of a finite group of order coprime to ${\rm char}(\k)$ in terms of second Stiefel-Whitney classes of its orthogonal representations. 

The organization of the paper is as follows. In Section 2 we discuss the theory of Clifford and Weyl algebras of arbitrary (skew)-symmetric forms. In Section 3 we discuss Clifford and Weyl algebras of non-degenerate forms, define symplectic Witt groups, and compute the symplectic Witt group 
of the category of super-representations of a finite group. 

{\bf Acknowledgements.} In this paper I collaborated with ChatGPT 5.5 and 5.6 Pro. This work was partially supported by the NSF grant DMS-2502467. I am grateful to V. Ostrik for useful comments.

\section{Clifford and Weyl algebras} 

\subsection{Definition and basic properties of Clifford and Weyl algebras} 
Throughout the paper, $\k$ will denote an algebraically closed field of characteristic $\ne 2$. Let $\C$ be a symmetric tensor category over $\k$ (\cite{EGNO}, Definitions 4.1.1, 8.1.1, 8.1.12).   
Let $V$ be an object of $\C$ equipped with a bilinear form $B\in \Hom(V\otimes V,\mathbf{1})$. Suppose $B$ is either symmetric, i.e., $B\circ c_{V,V}=B$, or skew-symmetric, i.e., $B\circ c_{V,V}=-B$, where $c$ is the symmetric braiding of $\C$. In this case $\h_V:=V\oplus \mathbf{1}$ has a natural structure of a Lie superalgebra in $\C$, i.e., 
the image of a Lie algebra in $\C\boxtimes \sVec$ under the (non-symmetric) forgetful functor to $\C$. Namely,
if $B$ is skew-symmetric then $\h_V$ is purely even and is called the {\it Heisenberg algebra} of $V$; 
the commutator maps $V\otimes V$ to $\mathbf{1}$ by $B$, and $\mathbf{1}$ is central. If $B$ is symmetric, 
the definition is the same, except $\mathbf{1}$ is even but $V$ is odd; in this case we say that 
$\h_V$ is the {\it Heisenberg superalgebra} of $V$. 

Define the associative (ind-)algebra $A(V)$ in $\C$ 
to be the quotient of the enveloping algebra $U(\h_V)$ by the relation identifying the unit 
copy of $\mathbf{1}$ in $U(\h_V)$ with $\mathbf{1}\subset \h_V$. In other words, if $z_U:\mathbf 1\to\mathfrak h_V\to U(\mathfrak h_V)$ is the central generator and $1_U:\mathbf 1\to U(\mathfrak h_V)$ is the algebra unit, then 
$$
A(V):=U(\mathfrak h_V)/({\rm Im}(z_U-1_U)).
$$
We call 
$A(V)$ the {\it Weyl algebra} of $V$ if $B$ is skew-symmetric and 
the {\it Clifford algebra} of $V$ if $B$ is symmetric. 
For $\C=\Vec$, this agrees with the definitions 
in classical algebra.\footnote{Note that many classical references normalize the Clifford algebra relation as $vw+wv=2B(v,w)$. In this paper we drop the factor of $2$.} 

The relations of $A(V)$ are invariant under the automorphism $-{\rm Id}$ on $V$, hence 
$A(V)=A_+(V)\oplus A_-(V)$, where this automorphism acts on $A_\pm(V)$ by $\pm 1$. 
In the Clifford algebra case, it is natural to think of this $\Bbb Z/2$-grading as a super-grading. 
Note also that changing the sign of $B$ transforms $A(V)$ into $A(V)^{\rm op}$, where in the Clifford case ${\rm op}$ 
denotes the super-opposite algebra. Thus for every choice of $i=\sqrt{-1}\in \k$ we obtain an isomorphism 
$A(V)\cong A(V)^{\rm op}$ (as rescaling each argument by $i$ transforms $B$ to $-B$). 

Let $\psi\in \C$ be a super-line, i.e., an invertible object of dimension $-1$ with an identification $\alpha: \psi\otimes \psi\cong \mathbf{1}$ (so $c_{\psi,\psi}=-\id_{\psi\otimes\psi}$); for example,
there is a natural one if $\C=\C_+\boxtimes \sVec$, where $\C_+$ is another tensor category and $\psi$ is the generating simple object of $\sVec$. We will identify $\psi\otimes \psi$ with ${\mathbf 1}$ using $\alpha$. 
If an object $V$ is equipped with a bilinear form $B$, then so is $V\otimes \psi$, but this 
functor exchanges symmetric and skew-symmetric forms. 

Let 
$$
A_\psi(V):=A_+(V)\oplus A_-(V)\otimes \psi
$$
with the natural multiplication.
Explicitly, the product on $A_\psi(V)$ restricts to the original product on $A_+(V)$ and the $A_+(V)$-bimodule structure on $A_-(V)$, while the product of two odd terms is
\[
(A_-(V)\otimes\psi)\otimes(A_-(V)\otimes\psi)\xrightarrow{\;(m\otimes\alpha)(\id\otimes c_{\psi,A_-(V)}\otimes \id)\;}A_+(V).
\]

\begin{lemma}\label{leweyl1}  (i) There is a natural isomorphism 
$
A(V\otimes \psi)\cong A_\psi(V).
$

(ii) Let $V,W\in \C$ be two objects equipped with bilinear forms of the same type, i.e., both symmetric or both skew-symmetric. Then 
$$
A(V)\otimes A(W)\cong A(V\oplus W),
$$ 
where in the case of symmetric forms this is the super-tensor product.
\end{lemma}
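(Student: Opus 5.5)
Both parts will be proved by comparing universal properties: each algebra is generated by a copy of its underlying object subject only to the quadratic relation coming from the form. Concretely, $A(V)$ is the quotient of the tensor algebra $T(V)$ by the ideal generated by the image of
\[
\id_{V\otimes V}\mp c_{V,V}-B:\ V\otimes V\to T(V),
\]
where the minus sign is taken in the Weyl case and the plus sign in the Clifford case. This description follows directly from the definition of $U(\h_V)$ after setting $z_U=1_U$. So a morphism $A(V)\to R$ into an algebra $R$ amounts to a morphism $f:V\to R$ satisfying $m(f\otimes f)(\id\mp c)=B\cdot 1_R$.

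\textbf{Part (i).} The first step is to define $f:V\otimes\psi\to A_\psi(V)$ as the composite
\[
V\otimes\psi\to A_-(V)\otimes\psi\subset A_\psi(V),
\]
where the first map is the generator $V\to A_-(V)$ tensored with $\id_\psi$. Next we check the defining relation. Multiplying two odd terms uses $c_{\psi,A_-}$ and $\alpha$, and the braiding of $V\otimes\psi$ with itself is $c_{V,V}\otimes c_{\psi,\psi}$ up to the obvious rearrangement, with $c_{\psi,\psi}=-1$. Hence the expression $(\id - c)$ on $V\otimes\psi$ becomes $(\id + c)$ on $V$, and conversely, while the induced form on $V\otimes \psi$ is $B\otimes\alpha$ rearranged. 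Thus the relation for $V\otimes\psi$ is carried exactly to the relation for $V$, and we obtain a homomorphism $A(V\otimes\psi)\to A_\psi(V)$.

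For the inverse we use the symmetry of the construction: $V\cong (V\otimes\psi)\otimes\psi$ via $\alpha$. The same recipe gives $A(V)\to A_\psi(V\otimes\psi)$, and tensoring the odd part with $\psi$ once more yields a map $A_\psi(V)\to A(V\otimes\psi)$. Alternatively, we can use that $-\otimes\psi$ identifies the $\pm$ parts of $T(V)$ and $T(V\otimes\psi)$, matching degree-$n$ pieces via $V^{\otimes n}\otimes\psi^{\otimes n}$. The two maps are mutually inverse on generators, so they are isomorphisms.

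The main obstacle is the sign bookkeeping. One has to verify that the braiding $c_{V\otimes\psi,V\otimes\psi}$, after the rearrangement $(V\otimes\psi)^{\otimes 2}\cong V^{\otimes 2}\otimes\psi^{\otimes 2}$ (which itself involves $c_{\psi,V}$), contributes exactly one factor of $-1$ relative to $c_{V,V}$. I would check this with a hexagon/naturality diagram rather than by coordinates.

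\textbf{Part (ii).} The plan is to map $V\oplus W\to A(V)\otimes A(W)$ by $v\mapsto v\otimes 1$ and $w\mapsto 1\otimes w$. Here the multiplication on the tensor product uses the braiding $c_{A(W),A(V)}$, twisted by the parity sign in the Clifford case; equivalently, it is the ordinary tensor product of algebras in $\C\boxtimes\sVec$ pushed forward. The form on $V\oplus W$ is $B_V\oplus B_W$, with the cross terms zero. The relation therefore splits into four blocks:
\begin{itemize}
\item the $VV$ and $WW$ blocks hold because $A(V)$ and $A(W)$ satisfy their own relations;
\item the $VW$ and $WV$ blocks require that $v$ and $w$ (super)commute via the braiding, which is exactly how the tensor product multiplication is defined.
\end{itemize}
This gives $A(V\oplus W)\to A(V)\otimes A(W)$.

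Conversely, the inclusions $V,W\subset V\oplus W$ give maps $A(V),A(W)\to A(V\oplus W)$ whose images (super)commute. This follows from the relation on the cross terms, where the form vanishes. Multiplying the two maps then gives an algebra map $A(V)\otimes A(W)\to A(V\oplus W)$, by the universal property of the tensor product of algebras with braided-commuting images. Both composites are the identity on generators, which proves (ii).
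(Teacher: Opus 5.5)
Your proposal is correct and follows the same route as the paper: both parts come from the universal property of $A(V)$ as $T(V)$ modulo the quadratic relation $m(\id\mp c)=B$. In (i) you send $V\otimes\psi$ to $A_-(V)\otimes\psi$, in (ii) the vanishing cross-pairing gives the (super)commutation, and you supply more detail than the paper, which does not spell out the inverse in (i).

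One small correction to your sign bookkeeping. Under $\id_V\otimes\alpha$, the form induced on $(V\otimes\psi)\otimes\psi$ is $-B$, not $B$: the rearrangement crosses two copies of $\psi$ and so picks up $c_{\psi,\psi}=-1$. Likewise, $(A_\psi)_\psi$ is $A$ with the odd--odd product negated. So your ``symmetry'' inverse needs a rescaling by $\sqrt{-1}$, whereas your alternative via $T(V\otimes\psi)\cong T(V)_\psi$ avoids the issue entirely.
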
 

\begin{proof}
For (i), define the form on $V\otimes\psi$ by
\[
(V\otimes\psi)^{\otimes2}\xrightarrow{\;\id_V\otimes c_{\psi,V}\otimes\id_\psi\;}V^{\otimes2}\otimes\psi^{\otimes2}
\xrightarrow{\;B\otimes\alpha\;}\mathbf{1}.
\]
Because $c_{\psi,\psi}=-\id$, this changes the symmetry type of $B$. The generating object $V\otimes\psi$ is sent to the odd part $A_-(V)\otimes\psi$ of $A_\psi(V)$, and the defining commutation relation is exactly as above. Hence the universal property of the enveloping-algebra quotient gives $A(V\otimes\psi)\cong A_\psi(V)$.

 (ii) In the Weyl case, the generating objects $V$ and $W$ commute because the cross-pairing is zero, so the universal property gives 
$$
A(V)\otimes A(W)\cong A(V\oplus W).
$$ 
In the Clifford case the same argument is made in superalgebras: the cross-pairing is zero, so the odd generating objects from the two summands super-commute, which is precisely the super-tensor product convention.
\end{proof} 

Lemma \ref{leweyl1}(i) shows that in symmetric tensor categories containing a super-line, the theories of Weyl algebras and Clifford algebras are equivalent. 

\subsection{Contraction maps} 

Recall that for any $W\in \C$, the symmetric algebra $SW$ is a Hopf algebra with coproduct 
decomposing into homogeneous components $\Delta_{i,j}: S^{i+j}W\to S^iW\otimes S^jW$. 
In particular, by duality the components $\Delta_{1,j-1}$ define maps 
$$
D_j: W^*\otimes S^jW\to S^{j-1}W
$$ 
which combine into the contraction map 
$$
D: W^*\otimes SW\to SW.
$$ 
Thus if $V\in \C$ is equipped with a bilinear form $B$ then we have a map 
$$
D_{B,ij}: S^iV\otimes S^jV\to S^{i-1}V\otimes S^{j-1}V
$$
defined by 
$$
D_{B,ij}:=(D_i\otimes D_j)(\id\otimes c_{V^*,S^iV}\otimes \id)(B^*\otimes \id\otimes \id),
$$
where $B^*: \mathbf{1}\to V^*\otimes V^*$ is dual to $B$. 
These maps 
combine into the bicontraction map
$$
D_B: SV\otimes SV\to SV\otimes SV.
$$  

More generally, given $k\in \mathbb Z_{\ge 0}$, subsets $S\subset [1,i],T\subset [1,j]$ with 
$|S|=|T|=k$, and a bijection $\sigma: S\to T$, we have 
a morphism 
$$
B^{S,T,\sigma}_{ijk}: V^{\otimes i}\otimes V^{\otimes j}\to S^{i-k}V\otimes S^{j-k}V
$$
obtained by contracting the $S$-tensorands of $V^{\otimes i}$ with $T$-tensorands 
of $V^{\otimes j}$ according to $\sigma$ using $B$ and then projecting to symmetric powers. 
Then the morphism 
$$
B_{ijk}:=\sum_{S,T,\sigma} B^{S,T,\sigma}_{ijk}
$$
is $\mathfrak S_i\times \mathfrak S_j$-invariant, hence descends to a morphism 
$$
D_{B,ij}^{(k)}: S^iV\otimes S^jV\to S^{i-k}V\otimes S^{j-k}V.
$$ 
These morphisms combine into the map 
$$
D_B^{(k)}: SV\otimes SV\to SV\otimes SV.
$$ 
It is easy to see that 
$$
D_B^k=k! D_B^{(k)},
$$
in particular if ${\rm char}(\k)=p>0$ then $D_B^p=0$. 

\subsection{The PBW theorem}
The algebra $A(V)$ has an increasing $\mathbb Z_{\ge 0}$-filtration $F^\bullet$ defined by $\deg(V)=1$, and we have a natural surjective  homomorphism of graded algebras
$\xi: SV\to {\rm gr}A(V)$ for Weyl algebras and $\xi: \wedge V\to {\rm gr}A(V)$ for Clifford algebras. 

\begin{proposition}\label{pbw} (PBW theorem) $\xi$ is an isomorphism. Thus $A(V)$ is a filtered quantization of the  Poisson 
algebra $SV$ or super-Poisson algebra $\wedge V$. 
\end{proposition}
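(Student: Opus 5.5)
By Lemma \ref{leweyl1}(i) the Clifford case reduces to the Weyl case in $\C\boxtimes\sVec$, which contains a super-line. Indeed, $A(V)$ computed in $\C$ is the image of $A(V\otimes\psi\otimes\psi)$, and $\xi$ for the Clifford algebra of $V$ corresponds to $\xi$ for the Weyl algebra of $V\otimes\psi$, since $S(V\otimes\psi)=\wedge V\otimes\psi^{\bullet}$ in the appropriate sense. So it suffices to treat the Weyl case, where $B$ is skew-symmetric, and show that $\xi: SV\to{\rm gr}A(V)$ is injective.

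My plan is to construct the ``Moyal--Weyl'' (or rather normally ordered) star product on $SV$ directly, using the bicontraction maps just introduced. Define
$$
a* b := m\circ \exp(D_B)(a\otimes b)=\sum_{k\ge 0} m\circ D_B^{(k)}(a\otimes b),
$$
where $m: SV\otimes SV\to SV$ is the multiplication. It is important to use the divided powers $D_B^{(k)}$ rather than $D_B^k/k!$, so that the formula makes sense in positive characteristic. On each graded piece the sum is finite, since $D_B^{(k)}$ lowers both degrees by $k$.

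The key steps, in order, are as follows.

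(1) Show that $*$ is associative. This is the Wick-type identity
$$
D^{(k)}_{B}\circ(m\otimes\id)=\sum_{a+b=k}(m\otimes\id)\circ (D^{(a)}_{B})_{13}(D^{(b)}_{B})_{23},
$$
together with its mirror for $(\id\otimes m)$. These follow from the definition of $D^{(k)}_B$ as a sum over partial matchings $(S,T,\sigma)$: a matching from a product of two factors splits uniquely into matchings from each factor. Associativity then reduces to the fact that both bracketings equal the sum over triples of matchings between factors $12$, $13$, $23$. This is a combinatorial identity that holds at the level of $V^{\otimes i}\otimes V^{\otimes j}\otimes V^{\otimes l}$ and descends to symmetric powers, so it is valid in any symmetric tensor category.

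(2) Check the defining relation. For $v,w\in V$ we have $v*w=vw+B(v,w)$, hence $v*w-w*v=B(v,w)-B(w,v)\circ c$. Since $B$ is skew-symmetric, this equals $2B$ up to the paper's normalization convention for the bracket $[v,w]=B(v,w)$. I expect that one should use $\tfrac12 D_B$ instead, or equivalently the symmetrized contraction, to match $[\,,]=B$ exactly; this is allowed because ${\rm char}\,\k\ne2$. The unit is $1\in S^0V$.

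(3) Conclude. By the universal property of $U(\h_V)$ and of the quotient, $*$ gives an algebra map $A(V)\to(SV,*)$ that sends $V$ identically to $V$. It is filtered, and on associated graded it is a left inverse of $\xi$, because the top-degree part of $*$ is $m$. Hence $\xi$ is injective, and it is surjective by construction.

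The quantization statement then follows because the leading commutator term recovers the Poisson bracket defined by $B$. I expect step (1) to be the main obstacle: one has to make the matching combinatorics rigorous categorically, handling the braiding signs in the definition of $D_{B,ij}$ and verifying invariance under $\mathfrak S_i\times\mathfrak S_j\times\mathfrak S_l$. My first approach would be to work on tensor powers, where $B_{ijk}$ is defined without any division, and pass to coinvariants at the end.
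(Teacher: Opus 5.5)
Your proposal is correct and is essentially the paper's proof: reduce to the Weyl case via a super-line, build the star product $m_0\circ E$ from divided-power bicontractions, get associativity from $E\circ(m_0\otimes\id)=(m_0\otimes\id)\circ E_{13}E_{23}$ and commutativity of the $E_{ij}$, and conclude from $\operatorname{gr}(\phi)\circ\xi=\id$. The normalization fix you anticipate in step (2) is exactly the paper's choice, $E=\sum_k 2^{-k}D_B^{(k)}$ (i.e.\ $\exp(\tfrac12 D_B)$ in characteristic zero), since with $\exp(D_B)$ the commutator of generators would be $2B$ rather than $B$.
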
 

\begin{proof} By replacing $\C$ with $\C\boxtimes \sVec$ if necessary, we may assume without loss of generality that $\C$ contains a super-line. Then by Lemma \ref{leweyl1}(i), it suffices to prove the statement for Weyl algebras.
In this case, there is a natural morphism
$$
\beta_k: SV\otimes SV\to SV
$$ 
of degree $-2k$ given by 
$$
\beta_k:=m_0\circ D_B^{(k)},
$$
where $m_0$ is the multiplication on $SV$. For example, $\beta_0$ is the usual product and $\beta_1$ is the Poisson bracket defined by $B$. 

We can now define the {\it Moyal-Weyl} product on $SV$ by the formula 
$$
m:=\sum_{k\ge 0}2^{-k}\beta_k=m_0\circ E,\ E:=\sum_{k\ge 0}2^{-k}D_B^{(k)}.
$$
(Note that in characteristic zero $E$ can be expressed by the usual Moyal-Weyl formula
$E=\exp(\tfrac{1}{2}D_B)$).
It is easy to see that 
$$
E\circ (m_0\otimes \id)=(m_0\otimes \id) \circ E_{13}E_{23}
$$
as maps $SV\otimes SV\otimes SV\to SV\otimes SV$, where $E_{rs}$ denotes $E$ applied to the $r$-th and $s$-th factors. Namely, this identity follows from the fact that contraction with a tensor product is the sum of contractions with its factors. 

It follows that the product $m$ is associative. Indeed, 
$$
m\circ (m\otimes \id)=m_0\circ E\circ (m_0\otimes \id)\circ (E\otimes \id)=
m_0\circ (m_0\otimes \id) \circ E_{13}E_{23}E_{12}
$$
and likewise 
$$
m\circ (\id\otimes m)=m_0\circ (\id \otimes m_0) \circ E_{12}E_{13}E_{23}.
$$
Thus the statement follows from associativity of $m_0$ and the fact that $E_{ij}$ commute.

It follows that there is a natural homomorphism $\phi: A(V)\to (SV,m)$ sending $V$ to $V$ by the identity map, and we have $\operatorname{gr}(\phi)\circ \xi=\id_{SV}$. So $\xi$ is injective, hence an isomorphism. 
\end{proof} 

\begin{remark} Here is a shorter but less explicit proof of Proposition \ref{pbw} after reduction to skew-symmetric forms. Given an object $V\in \C$ with a skew-symmetric form $B$, define the affine group scheme $\mathcal H_V$ in $\C$, which 
as a scheme is the object $V\oplus {\mathbf 1}$ (i.e., $\mathcal O(\mathcal H_V)=S(V^*\oplus {\mathbf 1})$) with group law given by the following schematic formula (i.e., formula at the level of points over commutative algebras in $\C$): 
$$
(v_1,\alpha_1)*(v_2,\alpha_2):=(v_1+v_2,\alpha_1+\alpha_2+\tfrac{1}{2}B(v_1,v_2)).
$$
We call this group the {\it Heisenberg group} of $V$. Then $\h_V={\rm Lie}\mathcal H_V$. Thus 
the proposition follows from
\cite{E}, Example 4.8(2). 
\end{remark} 

\section{Clifford and Weyl algebras of non-degenerate forms} 
From now on assume that $B$ is non-degenerate, i.e., defines an isomorphism $V\to V^*$ by evaluating the first (or, equivalently, second) factor.\footnote{Indeed, the two morphisms
$B_1,B_2:V\longrightarrow V^*$ obtained by evaluating the first and the second tensorand of $B$ respectively are transposes of one another under rigidity (with the symmetry inserted). Hence $B_1$ is an isomorphism if and only if $B_2$ is an isomorphism.}

\subsection{Simplicity of Clifford and Weyl algebras} 

\begin{proposition}\label{simplic} Let $R$ be a simple (ind-)algebra in $\C$.

(i) Suppose $V$ is equipped with a skew-symmetric bilinear form $B$, and $S^p V=0$  (hence $S^iV=0$ for $i\ge p$) if $p={\rm char}(\k)>0$.  Then the algebra $A_R(V):=R\otimes A(V)$ is simple.

(ii) Suppose $V=W\oplus W^*$ with the skew-symmetric form defined by the pairing between $W$ and $W^*$, and 
$S^pW=S^pW^*=0$ (hence $S^iW=S^iW^*=0$ for $i\ge p$) if $p={\rm char}(\k)>0$.  Then the algebra $A_R(V):=R\otimes A(V)$ is simple.
\end{proposition}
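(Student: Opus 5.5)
Let $I\subset A_R(V)=R\otimes A(V)$ be a nonzero two-sided ideal. The plan is to use the PBW filtration to push $I$ down to a nonzero ideal of $R$, and then use simplicity of $R$. By Proposition \ref{pbw}, $A(V)\cong SV$ as filtered objects, with $\operatorname{gr}A(V)=SV$. So $A_R(V)$ is filtered by $F^n=R\otimes F^nA(V)$, and $F^n/F^{n-1}\cong R\otimes S^nV$. Let $n$ be minimal with $I\cap F^n\neq 0$, and put $J:=I\cap F^n$. If $n=0$, then $J$ is a nonzero ideal in $R=F^0$, hence $J=R$ and $1\in I$. So it suffices to show that $n>0$ leads to a contradiction.

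The key tool is the commutator with $V$. Consider
$$
\operatorname{ad}:V\otimes A_R(V)\to A_R(V),\qquad v\otimes a\mapsto va-av,
$$
where the product uses the braiding to move $v$ past $R$. Since $I$ is a two-sided ideal, this map sends $V\otimes I$ into $I$. On associated graded pieces, $\operatorname{ad}$ lowers the filtration degree by one. Under $A(V)\cong (SV,m)$ it is given by $\beta_1$, i.e. by the contraction
$$
D\circ(B_1\otimes\id): V\otimes S^nV\to S^{n-1}V,
$$
tensored with $\id_R$. Here the Moyal--Weyl formula shows that the higher $\beta_k$ do not contribute when one factor lies in $V$. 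It follows that $\operatorname{ad}(V\otimes J)\subset I\cap F^{n-1}=0$. Passing to the symbol, the image $\bar J\subset R\otimes S^nV$ of $J$ is nonzero and satisfies
$$
(\id_R\otimes D_n)(V^*\otimes\bar J)=0 .
$$
Here we use $B_1:V\cong V^*$ from non-degeneracy.

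The main obstacle is to show that, for $1\le n$ (and $n<p$ in positive characteristic), no nonzero subobject $X\subset R\otimes S^nV$ is killed by all contractions. I would prove the following: the map $S^nV\to V\otimes S^{n-1}V$ adjoint to $D_n$ is the coproduct component $\Delta_{1,n-1}$. Composed with multiplication $V\otimes S^{n-1}V\to S^nV$, it equals $n\cdot\id$. This holds because $m_0\circ\Delta_{1,n-1}$ is multiplication by $n$ on $S^n$, which is the Euler/derivation property of the Hopf algebra $SV$. Since $n\ne0$ in $\k$ for $n<p$, the map $\Delta_{1,n-1}$ is a split monomorphism, so $\id_R\otimes\Delta_{1,n-1}$ is injective. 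Vanishing of the contraction on $V^*\otimes X$ is, by adjunction, the same as $(\id_R\otimes\Delta_{1,n-1})(X)=0$, which forces $X=0$. The hypothesis $S^pV=0$ in (i) guarantees $n<p$, since $SV$ lives in degrees $<p$. This gives the contradiction, so $n=0$ and $I=A_R(V)$.

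For (ii), where only $S^pW=S^pW^*=0$ is assumed, I would run the same argument with the bigrading $SV=SW\otimes SW^*$ in place of the single grading. Choose $(a,b)$ minimal for a suitable order, e.g. lexicographic on $(a+b,a)$, with $I$ meeting $R\otimes S^aW\otimes S^bW^*$ nontrivially at the symbol level. Commutators with $W^*$ contract the $S^aW$ factor, and commutators with $W$ contract the $S^bW^*$ factor. The Euler argument applied separately to $S^aW$ (needing $a<p$) and to $S^bW^*$ (needing $b<p$) then reduces to $a=b=0$. A small point to check is compatibility of the chosen order with the filtration on $I$. To handle it, I would use the normally ordered PBW basis, where $W$ is placed before $W^*$, so that commutators act on each factor by derivations.
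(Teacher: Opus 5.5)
Your proposal is correct and follows essentially the same route as the paper. The paper also takes the minimal filtration degree of the ideal, uses that commutators with $V$ (respectively with $W$ and $W^*$ in the ordered PBW picture) act on symbols by contraction, and gets injectivity from $m_0\circ\Delta_{1,n-1}=n\cdot\id$ for $0<n<p$. For (ii), the paper makes your ``small point'' explicit with a bifiltration $F^{d,b}$ (total degree, then $W^*$-degree, transported by ordered multiplication); in both cases the contradiction uses only minimality of the total degree, and the second index serves just to isolate a nonzero symbol in one bigraded piece, exactly as your lexicographic choice would.
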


\begin{example} Let $\C=\sVec$, $V\in \C$ a purely odd space (i.e., $V=V_0\otimes \psi$, where $V_0$ is a usual vector space over $\k$), and $R=\k$. Then 
Proposition \ref{simplic}(i) states that for a non-degenerate symmetric form $B_0$ on $V_0$, the Clifford algebra ${\rm Cl}(V_0,B_0)$ is simple {\it as a super-algebra}. Note that if $\dim V_0$ is odd, 
it is not simple as an ordinary algebra; rather, it is the direct sum of two matrix blocks. 
\end{example} 

\begin{proof} The filtration on $A(V)$ extends naturally to $A_R(V)$ by setting $\deg R=0$. 

(i) Let $i>0$, and identify $V$ with $V^*$ by evaluating $B$ on the first argument. Recall that $D_i: V\otimes S^iV\to S^{i-1}V$ corresponds by duality to the map $\Delta_{1,i-1}: S^iV\to V\otimes S^{i-1}V$.
Composing $\Delta_{1,i-1}$ with multiplication $V\otimes S^{i-1}V\to S^iV$ gives multiplication by $i$. Hence  in characteristic zero or for $i<p$ in characteristic $p$ the map $\Delta_{1,i-1}$ is a split injection. 

It follows that for such $i$, if $E\subset R\otimes S^iV$ is a nonzero object, then $(1_R\otimes D_i)(V\otimes E)\ne 0$.
Thus if $E\subset F^iA_R(V)$ is a nonzero subobject whose image in
$\operatorname{gr}^iA_R(V)=F^iA_R(V)/F^{i-1}A_R(V)$
is nonzero, then the leading term of $[\,,\,](V\otimes E)$ is the contraction above, so the subobject $[\,,\,](V\otimes E)\subset F^{i-1}A_R(V)$ is nonzero. Now let $I$ be a nonzero two-sided ideal in $A_R(V)$ and suppose $i$ is the smallest integer such that $I\cap F^iA_R(V)\ne 0$. Denote this intersection by $E$. If $i>0$ then as we have shown, $[\,,\,](V\otimes E)\subset I\cap F^{i-1}A_R(V)$ is nonzero, a contradiction with minimality of $i$. Thus $i=0$, so $I\cap R\ne 0$. Since $R$ is simple, $I\cap R=R$, so $I$ contains the unit of $A_R(V)$, hence $I=A_R(V)$. 

(ii) By Proposition \ref{pbw}, the ordered multiplication map
\[
R\otimes SW\otimes SW^*\longrightarrow A_R(W\oplus W^*)
\]
is an isomorphism of objects: its associated graded map is the identity on $R\otimes SW\otimes SW^*$. Define a filtration of $F^dA_R(W\oplus W^*)$ by \scriptsize
\[ 
F^{d,b}A_R(W\oplus W^*):=R\otimes\left(\bigoplus_{a+c<d}S^aW\otimes S^cW^*
\oplus\bigoplus_{c\le b}S^{d-c}W\otimes S^cW^*\right),
\ 0\le b\le d,
\] \normalsize
where the right-hand side is transported to $A_R(W\oplus W^*)$ by ordered multiplication.
The successive subquotients are
\[
\operatorname{gr}^{d,b}A_R(W\oplus W^*)=F^{d,b}/F^{d,b-1}\cong R\otimes S^{d-b}W\otimes S^bW^*,
\]
where $F^{d,-1}:=R\otimes\bigoplus_{a+c<d}S^aW\otimes S^cW^*$ (and $F^{0,-1}:=0$). Let $I$ be a nonzero two-sided ideal in $A_R(W\oplus W^*)$, and let $(d,b)$ be minimal in lexicographic order such that $I\cap F^{d,b}\ne0$. Let $E$ be the nonzero image of $I\cap F^{d,b}$ in $\operatorname{gr}^{d,b}A_R(W\oplus W^*)$.

If $b>0$ then the contraction map 
$$
D_{b,W^*}: W\otimes S^{d-b}W\otimes S^bW^*\to S^{d-b}W\otimes S^{b-1}W^*
$$ 
corresponds by duality to the injective map 
$$
\id\otimes \Delta_{1,b-1}^{W^*}: S^{d-b}W\otimes S^bW^*\to S^{d-b}W\otimes W^*\otimes S^{b-1}W^*
$$ 
(namely, in characteristic $p$ since $E\ne0$, we have $0<b<p$, and $\Delta^{W^*}_{1,b-1}$ followed by multiplication is $b\cdot \id$, so it is injective). It follows that  $(1_R\otimes D_{b,W^*})(W\otimes E)\ne 0$.
The leading term of the commutator of $W$ with $F^{d,b}$ is this contraction, so $[W,I]$ has a nonzero image in total degree $d-1$, contradicting minimality of $d$.

Similarly, if $b<d$ then the contraction map  
$$
D_{d-b,W}: W^*\otimes S^{d-b}W\otimes S^bW^*\to S^{d-b-1}W\otimes S^bW^*
$$ 
corresponds by duality to the injective map 
$$
\Delta_{d-b-1,1}^W\otimes \id: S^{d-b}W\otimes S^bW^*\to S^{d-b-1}W\otimes W\otimes S^bW^*.
$$ 
So $(1_R\otimes D_{d-b,W})(W^*\otimes E)\ne 0$. 
The leading term of the commutator of $W^*$ with $F^{d,b}$ is this contraction, so $[W^*,I]$ has a nonzero image in total degree $d-1$, again a contradiction. Thus $d=0$. Hence $I\cap R\ne 0$, and since $R$ is simple, $I\cap R=R$; consequently $I$ contains the unit, hence $I=A_R(W\oplus W^*)$.
\end{proof} 

\begin{corollary}\label{simplic1} Let $\operatorname{char}(\k)=p>0$ and 
$$
V=V_1\oplus\cdots\oplus V_n,\ B=B_1\oplus\cdots\oplus B_n,
$$ 
where $B_i$ are skew-symmetric bilinear forms on $V_i$. If for all $1\le m\le n$ either $S^pV_m=0$ or $V_m=W_m\oplus W_m^*$ with $B_m$ defined by the pairing of $W_m,W_m^*$ with $S^pW_m=S^pW_m^*=0$ then $A(V)$ is a simple algebra. 
\end{corollary}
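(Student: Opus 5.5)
The plan is induction on $n$, using Lemma \ref{leweyl1}(ii) together with the fact that Proposition \ref{simplic} allows an arbitrary simple coefficient algebra $R$. Since all the $B_m$ are skew-symmetric, repeated application of Lemma \ref{leweyl1}(ii) gives an isomorphism of algebras in $\C$:
$$
A(V)\cong A(V_1)\otimes A(V_2)\otimes\cdots\otimes A(V_n).
$$
This is an ordinary tensor product of algebras in $\C$, with the multiplication on the tensor product defined via the symmetric braiding; it is not a super-tensor product, because we are in the Weyl case. It therefore suffices to show that each partial product
$$
R_m:=A(V_1)\otimes\cdots\otimes A(V_m)
$$
is a simple (ind-)algebra, for $m=0,1,\dots,n$, where $R_0=\mathbf{1}$.

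We argue by induction on $m$. The base case is $R_0=\mathbf{1}$, which is a simple algebra since $\mathbf 1$ is simple in a tensor category. For the inductive step, assume $R_{m-1}$ is simple and write $R_m=R_{m-1}\otimes A(V_m)=A_{R_{m-1}}(V_m)$ in the notation of Proposition \ref{simplic}. The hypothesis on $V_m$ gives two cases.
\begin{itemize}
\item If $S^pV_m=0$, apply Proposition \ref{simplic}(i) with $R=R_{m-1}$ and $V=V_m$.
\item If $V_m=W_m\oplus W_m^*$ with the pairing form and $S^pW_m=S^pW_m^*=0$, apply Proposition \ref{simplic}(ii) instead.
\end{itemize}
Either way $R_m$ is simple. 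Taking $m=n$ gives simplicity of $A(V)$.

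The step needing the most care is ensuring that Proposition \ref{simplic} really applies with $R=R_{m-1}$ placed as the left tensor factor, using the same multiplication convention as $A_R(V)=R\otimes A(V)$. Two points need checking.
\begin{itemize}
\item The isomorphism of Lemma \ref{leweyl1}(ii) should be compatible with associativity of the iterated tensor product, so that $A(V_1\oplus\cdots\oplus V_m)\cong A(V_1\oplus\cdots\oplus V_{m-1})\otimes A(V_m)$ as algebras.
\item $R_{m-1}$ is only an ind-algebra, and Proposition \ref{simplic} explicitly allows this.
\end{itemize}
The filtration arguments in the proof of Proposition \ref{simplic} are insensitive to the structure of $R$ beyond simplicity, since $R$ sits in filtration degree $0$. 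So no additional obstacle is expected, and the corollary follows directly.
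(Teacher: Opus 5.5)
Your proposal is correct and follows the paper's proof: induction on $n$, using Lemma \ref{leweyl1}(ii) to write $A(V)\cong A(V_1\oplus\cdots\oplus V_{n-1})\otimes A(V_n)$, then applying Proposition \ref{simplic}(i) or (ii) with the simple coefficient algebra $R=A(V_1\oplus\cdots\oplus V_{n-1})$. Your checks on placing $R$ as the left tensor factor and on the ind-algebra setting agree with the definition $A_R(V)=R\otimes A(V)$ used there.
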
 

\begin{proof} The proof is by induction in $n$ starting with the trivial case $n=0$. Take $n>0$. Let $V'=V_1\oplus\cdots\oplus V_{n-1}$. Then by Lemma \ref{leweyl1}(ii), $A(V)=A(V')\otimes A(V_n)$. But $A(V')$ is simple by the induction assumption, so $A(V)$ is simple by Proposition \ref{simplic}. 
\end{proof} 

Observe that the algebra $A(W\oplus W^*)$ acts naturally on $SW$, with $W$ acting by multiplication and 
$W^*$ by contraction. Thus we have a homomorphism 
$$
\eta: A(W\oplus W^*)\to \underline{\End}(SW).
$$ 

Also recall from \cite{OZ} that an {\it Azumaya algebra} in $\C$ is an algebra $A$ such that 
the natural map $A\otimes A^{\rm op}\to \underline{\End}(A)$ is an isomorphism. Morita equivalence classes of 
Azumaya algebras form a group called the {\it Brauer group} of $\C$, denoted $\Br(\C)$. By \cite{DN}, Subsection 5.5, 
if $\C$ is finite then there is a natural isomorphism $\Br(\C)\to \Pic(\C)$ which sends $A$ to $A$-mod. 

\begin{corollary}\label{simplic2} Let $\C$ be a Frobenius exact symmetric tensor category (cf. \cite{CEO}).

(i) If $V\in \C$ is equipped with a skew-symmetric form and the symmetric algebra $SV$ has finite length then $A(V)$ is a simple algebra.

(ii) If $W\in \C$ and the symmetric algebra $SW$ has finite length then $\eta$ is an isomorphism.

(iii) In (i) $A(V)$ is a (central) Azumaya algebra. 
\end{corollary}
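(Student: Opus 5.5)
Frobenius exactness gives the needed structure: by \cite{CEO}, a Frobenius exact symmetric tensor category admits a symmetric tensor functor to $\Ver_p$ (or to $\sVec$ in characteristic zero), or at least is controlled by it. Finite length of $SV$ should then force $V$ to decompose into pieces of the two types covered by Corollary \ref{simplic1}. The plan is to reduce all three parts to computations with $V$ built from simple objects of $\Ver_p$, and then to use Proposition \ref{simplic} and Corollary \ref{simplic1}.

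\textbf{Part (i).} In characteristic zero, finite length of $SV$ means $S^nV=0$ for large $n$. In a category fibered over $\sVec$ this forces $V$ to be "odd", and then the argument of Proposition \ref{simplic}(i) applies directly, since all $i$ are invertible.

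In characteristic $p$, I would first show that finite length of $SV$ in a Frobenius exact category implies $S^pV$ vanishes "up to symplectic pairs". Concretely:
\begin{enumerate}
\item Use the Frobenius functor. Frobenius exactness makes the Frobenius twist exact.
\item Finite length of $SV$ forces the Frobenius twist $V^{(1)}$ to have no component in the "trivial" (Vec) part.
\item Decompose $V$ along the Verlinde-type components: $V=\bigoplus_m V_m$, with the form $B$ respecting this decomposition (or pairing $W_m$ with $W_m^*$).
\item For each component, check in $\Ver_p$ that $S^pL_j=0$ for the simple objects $L_j$ with $j\ne 1$.
\end{enumerate}
Corollary \ref{simplic1} then gives simplicity.

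The main obstacle is this decomposition: producing an orthogonal splitting of $(V,B)$ into pieces with $S^pV_m=0$ or of the form $W\oplus W^*$. I would first try to split $V$ by the action of the canonical Frobenius decomposition (isotypic components with respect to $\Ver_p$-simples $L_j$, via the functor to $\Ver_p$). The form pairs the $L_j$-component with the $L_j^*=L_j$-component. Non-degeneracy then splits $V$ into orthogonal blocks. Within a block either $S^p$ vanishes, or a Lagrangian splitting exists; the latter is obtained by lifting from the $\Ver_p$ picture.

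\textbf{Part (ii).} $A(W\oplus W^*)$ is simple by Proposition \ref{simplic}(ii), applied after the same reduction as in (i) so that $S^pW=S^pW^*=0$ on relevant pieces. It is nonzero, so $\eta$ is injective. For surjectivity I would compare sizes. By PBW (Proposition \ref{pbw}), $A\cong SW\otimes SW^*$, and $\underline{\End}(SW)=SW\otimes (SW)^*$. Since $SW$ has finite length, $(SW)^*\cong S W^*$ up to the graded-dual identification, using finiteness and the pairing via contraction. So both sides have the same finite length, and an injection between them is an isomorphism.

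\textbf{Part (iii).} By (ii), $A(W\oplus W^*)\cong\underline{\End}(SW)$ is Azumaya. For general symplectic $V$, I would use $A(V)\otimes A(V)^{\rm op}\cong A(V)\otimes A(V)\cong A(V\oplus V)$, where the first isomorphism is $A(V)\cong A(V)^{\rm op}$ via rescaling by $i$. The second copy then carries the form $-B$, so $V\oplus V$ with form $B\oplus(-B)$ has the Lagrangian $W=$ diagonal and $W^*=$ antidiagonal. Thus $A(V)^{\otimes}A(V)^{\rm op}\cong\underline{\End}(S\Delta)$. Comparing with the map to $\underline{\End}(A(V))$: the latter is $A(V)\otimes A(V)^{\rm op}$-linear, and its source is simple by (i), hence injective. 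Using PBW, $A(V)\cong SV\cong S\Delta$ have equal length, so the map is an isomorphism. Centrality follows from the Azumaya property.
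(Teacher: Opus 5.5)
There is a genuine gap, in the step that (i) and the simplicity half of (ii) rest on. You try to split $(V,B)$ \emph{inside} $\C$ into orthogonal blocks along ``Verlinde-type components'', and to obtain Lagrangian splittings ``by lifting from the $\Ver_p$ picture''. This is impossible in general. The isotypic decomposition $F(V)=\bigoplus_i M_i\otimes L_i$ exists only after applying the fiber functor $F\colon\C\to\Ver_p$, and its pieces need not come from subobjects of $V$.

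For instance, let $X=L_2\oplus L_3\in\Ver_p$ with $p\ge5$. Let $\C$ be the representation category of the group scheme $GL(X)$ in $\Ver_p$; it fibers over $\Ver_p$, hence is Frobenius exact. Let $V=X\oplus X^*$ with the pairing form. Then $SV$ has finite length, and $X$ and $X^*$ are simple and non-isomorphic in $\C$. So the only splittings of $V$ in $\C$ are $V$ itself and $X\oplus X^*$. Yet $S^pX\supset S^{p-2}L_2\otimes S^2L_3\ne0$, so neither Proposition~\ref{simplic} nor Corollary~\ref{simplic1} applies to $V$ in $\C$.

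The missing idea is to transfer the question to $\Ver_p$. Since $F$ is exact, faithful and symmetric monoidal, $F(A(V))\cong A(F(V))$, and a nonzero proper two-sided ideal $I\subset A(V)$ gives a nonzero proper ideal $F(I)\subset A(F(V))$. In $\Ver_p$, semisimplicity provides the splitting. First, $M_1=0$ because $SV$ has finite length. Next, $L_i$ is symplectic for $i$ even and orthogonal for $i$ odd. So $B$ induces a non-degenerate symmetric form on $M_i$ for $i$ even; splitting $M_i$ into orthogonal lines gives copies of $L_i$ with $S^pL_i=0$. For $i$ odd it induces a skew form; splitting into symplectic planes gives pairs $L_i\oplus L_i$. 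This finer splitting matters: a whole isotypic block such as $L_2^{\oplus3}$ with $p\ge5$ satisfies neither alternative of your dichotomy ``$S^p$ vanishes or a Lagrangian splitting exists''. Likewise in (ii), simplicity and the length comparison should be carried out for $F(\eta)$. There $(SF(W))^*\cong S(F(W)^*)$ is immediate, since every object of $\Ver_p$ is self-dual.

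Second, Frobenius exactness alone does not give you $F$. The theorem of \cite{CEO} produces a fiber functor to $\Ver_p$ only for Frobenius exact categories \emph{of moderate growth}, and your proposal never establishes moderate growth; ``or at least is controlled by it'' hides exactly this point. The paper first replaces $\C$ by the tensor subcategory generated by $V$ (resp.\ $W$) and proves Lemma~\ref{lemo}. There, $S^NV=0$ forces $\sum_{\sigma\in\mathfrak S_N}\sigma$ to act by zero on $V^{\otimes N}$. Hence $V$ and $V^*$ have finite symmetric degree, hence finite alternating degree, and then $\mathbf 1\oplus V\oplus V^*$ has finite growth degree. In characteristic zero this is not even needed for (i), since Proposition~\ref{simplic} has no hypothesis there.

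Once (i) and (ii) are repaired this way, your (iii) is fine and a mild variant of the paper's. The paper identifies the regular action of $A(V)\otimes A(V)^{\rm op}\cong A(V\oplus V,B\oplus(-B))$ with $\eta$ for the diagonal/antidiagonal splitting. You instead combine simplicity of the source with a length count, which works because $A(V)\cong SV$ and $(SV)^*\cong SV$.
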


\begin{proof} Recall that a symmetric tensor category $\D$ is said to have {\it moderate growth} if for every 
$X\in \D$ the length $\ell(X^{\otimes n})$ grows at most exponentially with $n$. 
We will need the following lemma. 

\begin{lemma}\label{lemo} Let $\D$ be a symmetric tensor category over $\k$ tensor generated by an object $V$ (i.e., 
every object of $\D$ is a subquotient of a finite direct sum of tensor words in $V$ and $V^*$). If the algebra 
$SV$ or $\wedge V$ has finite length then $\D$ has moderate growth. 
\end{lemma} 

\begin{proof} We may assume that $SV$ has finite length: otherwise, if $\wedge V$ has finite length, 
replace $\D$ with $\D\boxtimes \sVec$ and $V$ by $\psi\oplus V\otimes \psi$ (the category tensor generated by this object is $\D\boxtimes \sVec$). Thus $S^NV=0$ for some $N$. So in the case ${\rm char}(\k)=0$ the category $\D$ 
is Schur-finite, hence has moderate growth (\cite{De}); in fact, by the main result of 
\cite{De} it is super-Tannakian. 

So we may assume that ${\rm char}(\k)=p>0$. 
Then the symmetrizing element
\[
\mathbf e_N:=\sum_{\sigma\in\mathfrak S_N}\sigma\in\k\mathfrak S_N
\]
factors through $V^{\otimes N}\twoheadrightarrow S^NV$, and hence acts by zero on $V^{\otimes N}$. Since $\mathbf e_N\ne0$ in $\k\mathfrak S_N$, this gives $\operatorname{sd}(V)<\infty$ in the notation of \cite[Definition~4.1]{CEO}.
Since duality intertwines the symmetric-group actions on $V^{\otimes n}$ and $(V^*)^{\otimes n}$ (up to the involution $\sigma\mapsto\sigma^{-1}$ of $\k\mathfrak S_n$), it follows that $\operatorname{sd}(V^*)=\operatorname{sd}(V)<\infty$. Hence \cite[Proposition~4.7(6)]{CEO} gives $\operatorname{ad}(V),\operatorname{ad}(V^*)<\infty$. Lemma~4.9(2) of \cite{CEO} then implies that
$Z:=\mathbf1\oplus V\oplus V^*$ has finite alternating degree, and hence finite growth degree by \cite[Proposition~4.7(5)]{CEO}. For any fixed $X\in\D$, there are $r,M\ge1$ such that $X$ is a subquotient of $(Z^{\otimes r})^{\oplus M}$. Therefore $X^{\otimes n}$ is a subquotient of $(Z^{\otimes rn})^{\oplus M^n}$, and
\[
 \ell(X^{\otimes n})\le M^n\ell(Z^{\otimes rn}),
\]
which grows at most exponentially. Thus $\D$ has moderate growth.
\end{proof} 

Now we proceed to prove the corollary. Without loss of generality, we may assume that $\C$ is tensor generated by $V$ in (i),(iii) and by $W$ in (ii): otherwise replace $\C$ by the full tensor subcategory generated by $V$ or $W$. Then by Lemma \ref{lemo}, $\C$ has moderate growth. Hence if ${\rm char}(\k)=0$ then $\C$ is super-Tannakian by \cite{De}, thus all parts of the corollary follow from classical algebra. So we may assume that ${\rm char}(\k)=p>0$. 
In this case, by \cite{CEO}, $\C$ admits a fiber functor $F: \C\to \Ver_p$.

(i) Since $F$ is exact and faithful, it suffices to show that $F(A(V))=A(F(V))$ is simple. Now, $F(V)=\oplus_{i=1}^{p-1}M_i\otimes L_i$, where $M_i$ are finite-dimensional $\k$-vector spaces and $L_i$ are simple objects of $\Ver_p$. Since $SV$ has finite length, so does $SF(V)=F(SV)$, so $M_1=0$.
But for $i>1$, $S^pL_i=0$ (\cite[Section~6]{EOV}). Also, $L_i$ is self-dual, and its invariant bilinear form is skew-symmetric for $i$ even and symmetric for $i$ odd. Thus a skew form on the isotypic component $M_i\otimes L_i$ corresponds to a symmetric form on $M_i$ if $i$ is even, and to a skew form on $M_i$ if $i$ is odd. Hence the form $B$ decomposes into a direct sum of forms on $V_i=L_i$ with $i$ even and forms on $V_j=L_j\oplus L_j$ for odd $j$ pairing the factors. More explicitly, for $i$ even the multiplicity space $M_i$ carries a non-degenerate symmetric form, so it is an orthogonal sum of lines, giving copies of $L_i$. For $i$ odd the multiplicity space $M_i$ carries a non-degenerate skew-symmetric form, so it is a sum of symplectic planes, giving summands $L_i\oplus L_i$. Thus the simplicity of $A(V)$ follows from Corollary \ref{simplic1}. 

(ii) It suffices to prove the statement after applying $F$, i.e., in $\Ver_p$. We have $F(W)=\oplus_{i=1}^{p-1}N_i\otimes L_i$, and since $SW$ has finite length, $N_1=0$. Thus by Lemma \ref{leweyl1}(ii) it suffices to prove the statement for 
$W=L_i$, $i\ge 2$.

But $S^pL_i=0$, hence by Proposition~\ref{simplic}(ii) $A(W\oplus W^*)$ is simple. So $\eta$ is injective. By Proposition \ref{pbw}, the multiplication map 
$$
SW^*\otimes SW\to A(W\oplus W^*)
$$ 
is an isomorphism in $\C$, and $\underline{\End}(SW)\cong (SW)^*\otimes SW=SW^*\otimes SW$, so the source and target of $\eta$ have the same length. Thus $\eta$ is an isomorphism. 

(iii) The opposite algebra to $A(V)=A(V,B)$ is $A(V,-B)$, and the orthogonal direct sum $(V,B)\oplus(V,-B)$ is isometric, using the isomorphism $V\xrightarrow{\sim}V^*$ given by $B$, to the object $V\oplus V^*$ with the skew-symmetric form defined by the pairing of the summands. Under the PBW isomorphism, the regular left-right action
\[
 A(V)\otimes A(V)^{\rm op}\longrightarrow\underline{\End}(A(V))
\]
identifies with the map
\[
 A(V\oplus V^*)\longrightarrow\underline{\End}(SV)
\]
from (ii). Hence it is an isomorphism.
\end{proof} 

\subsection{The top symmetric power of a symplectic object}

Let $\C$ be a Frobenius exact symmetric tensor category. 
Let $V\in  \C$ be a symplectic object (i.e., equipped with a non-degenerate skew-symmetric form) 
and assume that $SV$ has finite length. Following \cite{CEN}, let $m(V)$ be the largest integer $m$ for which $S^mV\ne 0$. 
By \cite[Proposition~2.3.2]{CEN}, the object $\psi=S^{m(V)}V$ is invertible.

\begin{lemma}\label{psisq} (i) For each $0\le i\le m(V)$ there are natural isomorphisms 
$$
S^iV\cong (S^iV)^*\cong S^{m(V)-i}V\otimes \psi
$$ 
and $\psi\otimes \psi\cong {\mathbf 1}$. 

(ii) $c_{\psi,\psi}=(-1)^{m(V)}\id_{\psi\otimes \psi}$. In particular, if $m(V)$ is odd, then $\psi$ is a super-line. 
\end{lemma}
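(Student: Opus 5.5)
Both parts follow the pattern of Corollary \ref{simplic2}: build the maps intrinsically in $\C$, then check the required properties after applying a fiber functor $F:\C\to\Ver_p$, which exists by \cite{CEO} since $\C$ may be taken tensor generated by $V$. In characteristic zero we would instead use \cite{De} and classical algebra.

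\emph{Part (i).} The isomorphism $S^iV\cong (S^iV)^*$ comes from the form $B$. The non-degenerate skew form identifies $V\cong V^*$. The induced symmetric bilinear pairing $S^iV\otimes S^iV\to \mathbf 1$ is obtained by applying $D_B^{(i)}$ followed by the projection to $S^0V\otimes S^0V=\mathbf 1$. Its non-degeneracy can be checked after applying $F$. There it reduces, via Lemma \ref{leweyl1}(ii) and the decomposition of $F(V)$ into blocks $L_i$ ($i$ even) and $L_j\oplus L_j$ ($j$ odd) from Corollary \ref{simplic2}(i), to the case of a single block. Alternatively, one can argue through the Azumaya property of $A(V)$ in Corollary \ref{simplic2}(iii).

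The isomorphism $S^{m-i}V\otimes\psi\cong (S^iV)^*$, with $m=m(V)$, is the multiplication pairing $S^iV\otimes S^{m-i}V\to S^mV=\psi$, tensored with $\psi^{*}$. I would show it is non-degenerate as follows. A nonzero subobject $E\subset S^iV$ pairing trivially with $S^{m-i}V$ would give a proper ideal of $SV$ meeting $S^iV$. Transported through $\eta$ and the Azumaya property, this contradicts the fact that $SV$ is a simple $A(V\oplus V^*)$-module. Equivalently, $SV$ is a Frobenius algebra with socle $S^mV$, as in \cite{CEN}. Taking $i=0$ gives $\psi^*\cong\psi$, that is, $\psi\otimes\psi\cong\mathbf 1$.

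\emph{Part (ii).} Since $\psi$ is invertible, $c_{\psi,\psi}$ is a scalar $\pm1$, and $\dim\psi=\pm1$ with the same sign. It is enough to compute this scalar after applying $F$. By Lemma \ref{leweyl1}(ii) and $S(X\oplus Y)=SX\otimes SY$, the top power of an orthogonal sum is the tensor product of the top powers, and the numbers $m(\cdot)$ add. Signs of braidings multiply in the same way, since $c_{\psi_1\otimes\psi_2,\psi_1\otimes\psi_2}=c_{\psi_1,\psi_1}\otimes c_{\psi_2,\psi_2}$ up to conjugation by invertible scalars. So the claim reduces to the blocks $L_i$ with $i$ even and $L_j\oplus L_j$ with $j$ odd in $\Ver_p$.

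For these blocks I would use the known structure of $S^\bullet L_i$ in $\Ver_p$ from \cite{EOV}. There $S L_i$ has top degree $m(L_i)=p-i$, and the top power $S^{p-i}L_i$ is $\mathbf 1$ or $L_{p-1}$ according to parity. The object $L_{p-1}$ is the super-line, whose braiding is $-1$.
\begin{itemize}
\item For $i$ even, $p-i$ is odd and the top power is $L_{p-1}$, so the sign is $-1=(-1)^{m}$.
\item For $L_j\oplus L_j$ with $j$ odd, the top degree is $2(p-j)$, which is even, and the top power is $\mathbf 1$ or $L_{p-1}^{\otimes 2}=\mathbf 1$, so the sign is $+1=(-1)^m$.
\end{itemize}

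The main obstacle is getting these facts about $\Ver_p$ exactly right. I have not verified the top degree $p-i$ or the identification of the top power from \cite{EOV}, and they need to be pinned down. If they prove hard to extract, the fallback is a direct argument with $\dim$. Part (i) gives a graded isomorphism $SV\cong (SV)^*\otimes\psi$, and the Frobenius structure gives $\dim\psi=\pm1$. The sign could then be obtained by comparing $\dim S^mV$ with the dimension of the top exterior power, using a Koszul-type identity in the Grothendieck ring.
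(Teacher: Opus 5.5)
Your part (ii) is essentially the paper's argument: you reduce via the fiber functor to the blocks $L_{2k}$ and $L_j\oplus L_j$ ($j$ odd), use multiplicativity of $m(\cdot)$, of top powers and of braiding signs, and take $m(L_{2k})=p-2k$ and $S^{p-2k}L_{2k}\cong L_{p-1}$ from \cite{EOV}. Part (i) follows a different route from the paper, and two of its steps do not work as written.

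\textbf{The derivation of $\psi\otimes\psi\cong\mathbf 1$ is circular.} Non-degeneracy of the multiplication pairing $S^iV\otimes S^{m-i}V\to\psi$ gives $(S^iV)^*\cong S^{m-i}V\otimes\psi^*$, not $S^{m-i}V\otimes\psi$. At $i=0$ this reads $\psi\otimes\psi^*\cong\mathbf 1$, which holds for every invertible object, so it proves nothing. The self-duality of $\psi$ has to come from $B$, and there are two ways to get it.
\begin{itemize}
\item Show that your $B$-pairing on $S^mV=\psi$ is nonzero.
\item Argue as the paper does. Simplicity of the $A(V\oplus V^*)$-module $SV$, together with $V\cdot\psi=0$, gives $SV=SV^*\cdot\psi$. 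Hence $S^mV^*\otimes\psi\to S^0V=\mathbf 1$ is onto, and $S^mV^*\cong\psi$ via $B$. The same surjections plus a length count also give $S^iV\cong S^{m-i}V\otimes\psi$.
\end{itemize}
If you keep your pairing $D_B^{(i)}$, its non-degeneracy is not automatic. It is equivalent to the norm map $S^iV\to(V^{\otimes i})^{\mathfrak S_i}\cong(S^iV^*)^*$ being an isomorphism. That fails in general for $i\ge p$, and $m(V)$ can exceed $p$; for $L_j\oplus L_j$ it equals $2(p-j)$. It does hold after applying $F$: symmetric powers, invariants and the norm map are multiplicative under direct sums, and $S^pL=0$ for every simple summand $L\ne\mathbf 1$ of $F(V)$, so only degrees $<p$ occur. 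This check is the real content of the step, and your sketch omits it. Also, the pairing is $(-1)^i$-symmetric, not symmetric. At $i=m$ that observation alone forces $c_{\psi,\psi}=(-1)^m$ once the pairing is known to be nonzero, which would give a shortcut to (ii).

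\textbf{The ideal argument for the multiplication pairing does not reach a contradiction.} An ideal of $SV$ is not stable under contraction by $V^*$, so it is not an $A(V\oplus V^*)$-submodule, and its existence does not by itself contradict simplicity. Here is a repair.
\begin{itemize}
\item Suppose $0\neq E\subset S^iV$ with $E\cdot S^{m-i}V=0$, and let $J_k\neq 0$ be the top graded piece of the ideal $SV\cdot E$.
\item Then $k<m$ and $V\cdot J_k=0$. By PBW, the submodule generated by $J_k$ is $SV^*\cdot J_k\subset\bigoplus_{l\le k}S^lV$, which is nonzero and proper.
\item Hence $S^iV\to(S^{m-i}V)^*\otimes\psi$ is a monomorphism. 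Swapping $i\leftrightarrow m-i$ and comparing lengths makes it an isomorphism.
\end{itemize}
This yields the Frobenius property directly from Corollary~\ref{simplic2}(ii), which is arguably simpler than the paper's route through $\Ver_p$ and $\Ver_p(SL(j))$. Your sketch, however, does not contain this argument.
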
 
 
\begin{proof} We may assume $i\le m(V)$. 

(i) By Corollary \ref{simplic2}(ii), $SV$ is a simple module over $A(V\oplus V^*)$, hence generated by $\psi$. Thus
$$
(SV^*)\psi=(SV^*\cdot SV)\psi=A(V\oplus V^*)\psi=SV,
$$
in particular $(S^iV^*)\psi=S^{m(V)-i}V$. 
 So taking $i=m(V)$ and using that $B$ gives an isomorphism 
$V\cong V^*$ (by evaluating on the first argument), we get an isomorphism $\psi^{\otimes 2}\cong \mathbf 1$. 
For general $i$ we get a surjective map $a_i: S^iV^*\otimes \psi\to S^{m(V)-i}V$. Thus the length 
of $S^iV^*$ (or, equivalently, $S^iV$) is at least the length of $S^{m(V)-i}V$. Replacing $i$ with $m(V)-i$, we obtain that 
these lengths are equal, hence $a_i$ is an isomorphism. 

Now, we claim that $SV$ is a Frobenius algebra. Indeed, we may assume that $\C$ is tensor-generated by $V$, then it has moderate growth by Lemma \ref{lemo}. So there exists a fiber functor $F: \C\to \sVec$  in characteristic zero (\cite{De}), respectively $F: \C\to \Ver_p$  in characteristic $p$ (\cite{CEO}), and it suffices to check that the algebra $SF(V)$ is Frobenius. In characteristic zero, 
$F(V)=E\otimes \psi$ for a finite dimensional vector space $E$, so $SF(V)$ is manifestly a Frobenius algebra. In characteristic $p$ 
it suffices to check that $SL_j$, $j>1$ is Frobenius; but this follows by applying the Verlinde fiber functor
$\Ver_p(SL(j))\to\Ver_p$ to the tautological object $\k^j$, whose
nonzero symmetric powers are simple and whose complementary-degree
multiplication pairings are nonzero.

Thus the multiplication map $S^iV\otimes S^{m(V)-i}V\to S^{m(V)}V=\psi$ is a non-degenerate pairing defining an isomorphism $S^{m(V)-i}V\otimes \psi\cong (S^iV)^*$. This implies (i).

(ii) By Lemma \ref{lemo}, we may assume that $\C$ has moderate growth. Thus 
by \cite{De},\cite{CEO}, there is a fiber functor $F: \C\to \sVec$ in characteristic zero and $F: \C\to \Ver_p$ 
in characteristic $p$. It suffices to prove the statement after applying $F$, i.e., in $\sVec$ or $\Ver_p$. 

In characteristic zero, $F(V)$ is purely odd, so $SV$ becomes an ordinary exterior algebra: $m(V)=\dim F(V)$ and its top line has parity $m(V)$, whence $c_{\psi,\psi}=(-1)^{m(V)}$. So assume that the characteristic is $p>0$ and use the decomposition in the proof of Corollary~\ref{simplic2}(i). For the summands $V_j=L_j\oplus L_j$ with $j$ odd we have 
$m(V_j)=2m(L_j)$, $S^{m(V_j)}V_j\cong S^{m(L_j)}L_j\otimes S^{m(L_j)}L_j\cong\mathbf1$. 
So both sides of the desired identity are $+1$.
For the remaining summands, recall that for $V=L_{2k}$ we have $m(V)=p-2k$ so $(-1)^{m(V)}=-1$, and $\psi=L_{p-1}$ (\cite{EOV}) so $c_{\psi,\psi}=-1$, i.e., (ii) holds. Since the statement of (ii) is compatible with direct sums (if it holds for $V_1,V_2$ then it does for $V_1\oplus V_2$), the claim follows.
\end{proof}

\subsection{Queer algebras in symmetric tensor categories}
Let $\C$ be a symmetric tensor category with a fixed super-line $\psi$. Recall that 
for $V\in \C$ the {\it queer algebra} $Q(V)$ is the image of the embedding 
$$
\underline{\End}(V)\oplus \underline\Hom(V,V\otimes \psi)\hookrightarrow \underline{\End}(V\oplus V\otimes \psi)
$$
 given by the schematic formula 
$(A,C)\mapsto \begin{pmatrix} A & C\\ C& A\end{pmatrix}$ (using that a morphism $C:V\to V\otimes\psi$ also gives a morphism $V\otimes\psi\to V$, which we also denote by $C$).

If $V\ne 0$, the algebras $\underline{\End}(V)$ and $Q(V)$ are simple, since they are 
Morita equivalent to simple algebras $\mathbf{1}$ and $Q(\mathbf{1})$. 
Namely, the $\C$-module category of $\underline{\End}(V)$-modules 
is equivalent to $\C$ via the assignment $X\in \C\mapsto X\otimes V$, while 
the $\C$-module category of $Q(V)$-modules is equivalent to 
$\C\boxtimes_\sVec\Vec$, where $\sVec$ is generated by $\psi$. 
The category $\C\boxtimes_\sVec\Vec$ is the category of objects 
$X\in\C$ equipped with an isomorphism $\gamma: X\to X\otimes \psi$ 
such that 
\[
(\id_X\otimes\alpha)\circ(\gamma\otimes\id_\psi)\circ\gamma=\id_X,
\]
which is the same as $Q(\mathbf{1})$-modules 
in $\C$, and the equivalence of categories $\C\boxtimes_\sVec\Vec\to Q(V)$-mod
is given by $X\mapsto X\otimes V$.
Under this equivalence $\psi\subset Q(\mathbf{1})=\mathbf{1}\oplus\psi$ acts by $\gamma$, while tensoring with $V$ implements the Morita equivalence from $Q(\mathbf{1})$ to $Q(V)$.

It is easy to see that $Q(V)\cong Q(V\otimes \psi)$ and 
$$
Q(V\otimes W)\cong Q(V)\otimes \underline{\End}(W)\cong \underline{\End}(V)\otimes Q(W),
$$
$$
Q(V)\otimes Q(W)\cong\underline{\End}(V\otimes W\oplus V\otimes W\otimes \psi).
$$

\subsection{Clifford and Weyl algebras in the Verlinde category}
Recall that $\Ver_p\cong \Ver_p^+\boxtimes \sVec$, where $\sVec$ is generated by $\psi=L_{p-1}$. 

\begin{proposition} The only simple algebras in $\Ver_p$ are of the form $\underline{\End} V$ and $Q(V)$, 
$0\ne V\in \Ver_p$. 
\end{proposition}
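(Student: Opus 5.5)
The plan is to classify simple algebras $A$ in $\Ver_p$ through their module categories. Take a simple algebra $A$ in $\Ver_p$; it is finite (an actual object, not merely ind-), since simple algebras in a finite semisimple category will be shown to be finite-dimensional in the appropriate sense. Because $\Ver_p$ is semisimple, the category $\M=A$-mod of right $A$-modules in $\Ver_p$ is a semisimple module category. Simplicity of $A$ (no nontrivial two-sided ideals) should imply that $\M$ is indecomposable. The argument I would use is that a decomposition $\M=\M_1\oplus\M_2$ splits $A$, viewed as the regular module, into summands, and the corresponding central idempotents in $\underline{\End}_A(A)^{\rm op}=A$ yield two-sided ideals. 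Hence $\M$ is an indecomposable exact $\Ver_p$-module category.

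Next, use the classification of indecomposable module categories over $\Ver_p=\Ver_p^+\boxtimes\sVec$. The two candidates are $\Ver_p$ itself and $\Ver_p\boxtimes_{\sVec}\Vec\simeq\Ver_p^+$, the latter with $\psi$ acting by an invertible functor isomorphic to the identity. To obtain this classification I would argue as follows. An indecomposable module category over a fusion category with Grothendieck ring $K(\Ver_p)$ gives an irreducible based module over $K(\Ver_p)$. Since $\Ver_p^+$ has no nontrivial indecomposable module categories other than itself (its fusion rules are those of $SO(3)$-type Verlinde, and its invertible-object and subgroup data are trivial), the module categories over $\Ver_p$ correspond to module categories over $\sVec$, namely $\sVec$ and $\Vec$, tensored up. This is the step I expect to be the main obstacle. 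The first thing I would try is to use that $\Ver_p^+$ is a simple fusion category with no nontrivial fiber functor or module category, which is known from work of Ostrik, and then induce via the Deligne product.

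Finally, the reconstruction step: $A$ is Morita-equivalent data, recovered as $\underline{\End}(M)$ for a generator $M\in\M$. If $\M\simeq\Ver_p$, then $A\cong\underline{\End}(V)$, where $V$ is the image of $A$ under the equivalence. If $\M\simeq\Ver_p\boxtimes_{\sVec}\Vec$, then by the discussion of queer algebras above this is $Q(\mathbf 1)$-mod, and $A\cong\underline{\End}_{Q(\mathbf 1)}(X\otimes Q(\mathbf 1))$-type, which identifies with $Q(V)$ with $V\ne 0$ obtained by forgetting the $\gamma$-structure on a generator. The remaining check is that internal End algebras of generators in these two module categories are exactly $\underline{\End}(V)$ and $Q(V)$. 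This follows from the explicit equivalences $X\mapsto X\otimes V$ described above.
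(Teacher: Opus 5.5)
There is a genuine gap at your first step: you conclude that $\M=A$-mod is semisimple \emph{because} $\Ver_p$ is semisimple. That implication is false, since it would apply to every algebra in $\Ver_p$. For instance, $\k[x]/(x^2)\otimes\mathbf 1$ is an algebra in $\Ver_p$ whose regular module is indecomposable but not simple. The classification you then invoke covers only exact (over a fusion category, equivalently semisimple) module categories, so everything downstream depends on this point. It is precisely where the simplicity of $A$ must be used.

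What you need is a categorical Wedderburn theorem: simple algebras are exact. The paper gets this from \cite{CSZ}. Alternatively, you would have to show that the radical of $A$-mod is compatible with the $\Ver_p$-action. Then its restriction to $\Hom_A(X\otimes A,A)=\Hom(X,A)$ is cut out by a two-sided ideal of $A$, which simplicity forces to vanish. Since the free modules $X\otimes A$ form a projective generator, semisimplicity follows. (Your aside that simple algebras are automatically finite is false for ind-algebras, e.g.\ $\k(x)$ in $\Vec$. This is moot, though, since the statement concerns algebras that are objects of $\Ver_p$.)

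Your remaining steps match the paper's proof: indecomposability from simplicity, then $A\cong\underline{\End}_\M(V)$ by reconstruction. The classification, however, is better cited directly, as the paper does via \cite{EO,Os}. Indecomposable semisimple module categories over $\Ver_p$ correspond to ADET graphs of Coxeter number $p$, which for an odd prime $p$ are only $A_{p-1}$ and $T_{\frac{p-1}{2}}$, i.e.\ $\Ver_p$ and $\Ver_p^+$.

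Your detour through $\Ver_p^+$ rests on the claim that $\Ver_p^+$ has no module categories besides the regular one. That does not follow from its fusion rules or its lack of invertible objects, and it is itself most easily extracted from the same classification. Moreover, module categories over a Deligne product need not be products. Even granting the claim, you would still have to show the following: on $\M|_{\Ver_p^+}\simeq\Ver_p^+\boxtimes\mathcal N_0$, the functor $\psi\otimes-$ is an invertible object of $\mathrm{Fun}_{\Ver_p^+}(\M,\M)\simeq(\Ver_p^+)^{\rm op}\boxtimes\mathrm{Fun}(\mathcal N_0,\mathcal N_0)$, and it has the form $\mathbf 1\boxtimes F$. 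This holds because $\mathbf 1$ is the only invertible object of $\Ver_p^+$.

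Finally, in the queer case the module corresponding to $A$ is free, $M\cong Y\otimes Q(\mathbf 1)$, and $A\cong Q(Y)$. Taking $V$ to be the underlying object $Y\oplus Y\otimes\psi$ of $M$ would instead give the larger algebra $Q(Y)\otimes\underline{\End}(\mathbf 1\oplus\psi)$.
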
 

\begin{proof} By \cite{EO,Os} the only indecomposable semisimple (=exact) module categories over $\Ver_p$ are $\Ver_p$ and $\Ver_p^+$.
Indeed, the ADET graphs with Coxeter number $p$ are $A_{p-1}$ and $T_{\frac{p-1}{2}}$, giving $\Ver_p$ and $\Ver_p^+$, respectively.

If $A$ is a simple algebra in $\Ver_p$ then $A$ is exact (\cite{CSZ}), so the category ${\mathcal M}:=A$-mod is an indecomposable exact (i.e., semisimple) module category over $\Ver_p$. Indecomposability follows because a decomposition of $A$-$\mathrm{mod}$ would give a nontrivial product decomposition of the algebra object $A$. Thus $\M\cong \Ver_p$ or $\M\cong \Ver_p^+$. 

Let $V$ be the object in $\M$ whose image is the regular left $A$-module. By the standard reconstruction of an algebra from a module category and a generator (cf. \cite[Subsection~7.10]{EGNO}),
$A\cong \underline{\End}_{\M}(V)$
as algebra objects of $\Ver_p$. Thus in the two cases $A\cong\underline{\End}(V)=V\otimes V^*$ and $A\cong Q(V)$, respectively.
\end{proof} 

\begin{proposition} Let $S_k$ be the image of the spin representation of $\operatorname{Spin}(2k-1)$ under the fiber functor $\Ver_p({\rm Spin}(2k-1))\to \Ver_p$.
 For each $1\le k\le \frac{p-1}{2}$ the Weyl algebra $A(L_{p-2k+1})=A_\psi(L_{2k-1})$ is isomorphic to $Q(S_k)$.\footnote{Here if $k=1$ then ${\rm Spin}(2k-1)=1$ and $S_k=\mathbf 1$. So in this case the proposition states that $A_\psi(\mathbf 1)\cong Q(\mathbf 1)$.}
\end{proposition}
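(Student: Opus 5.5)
The plan is to use the classification of simple algebras in $\Ver_p$ just proved, and then pin down which case occurs and which object appears. Put $V:=L_{p-2k+1}$. Its invariant form is skew-symmetric, because $p-2k+1$ is even, and $S^pV=0$. So Corollary~\ref{simplic2}(i) and (iii), or directly Corollary~\ref{simplic1}, show that $A(V)$ is a simple (central) Azumaya algebra. By the preceding proposition, $A(V)\cong\underline{\End}(X)$ or $A(V)\cong Q(X)$ for some $0\ne X\in\Ver_p$. The identification $A(L_{p-2k+1})=A_\psi(L_{2k-1})$ is Lemma~\ref{leweyl1}(i), using $L_{p-2k+1}\cong L_{2k-1}\otimes\psi$.

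\emph{Deciding the $Q$ case.} By \cite{EOV}, $m(V)=p-(p-2k+1)=2k-1$ is odd. So by Lemma~\ref{psisq}(ii) the top power $S^{m(V)}V$ is a super-line, and it must be $\psi=L_{p-1}$. I would show that the top PBW-symbol component of $A(V)$, i.e.\ the lift of $S^{m(V)}V$ given by symmetrization (Moyal--Weyl) ordering, spans a copy of $\psi$ that commutes with $V$. This is the analogue of the central odd element $e_1\cdots e_{2k-1}$ in a classical odd Clifford algebra. The proof would be via the formula $m=m_0\circ E$: the commutator with $V$ picks out only odd-order contractions, and the contraction of $V$ into the top degree lands in $S^{m(V)-1}V\cong V^*\otimes\psi$ in a way that cancels by the symmetry computed in Lemma~\ref{psisq}. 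This gives an embedding $Q(\mathbf 1)=\mathbf 1\oplus\psi\hookrightarrow A(V)$ into the center. A nonzero central super-line squaring to a unit is impossible in $\underline{\End}(X)$, whose center is $\mathbf 1$. Hence $A(V)\cong Q(X)$.

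\emph{Identifying $X$.} Since $Q(X)\cong Q(X\otimes\psi)$, it suffices to show $X\cong S_k$ up to $\psi$. By PBW (Proposition~\ref{pbw}) I would compare objects:
\[
SV\cong A(V)\cong X\otimes X^*\otimes(\mathbf 1\oplus\psi).
\]
Classically, for $V_0=\k^{2k-1}$ with its $\operatorname{Spin}(2k-1)$-action, the odd-dimensional Clifford superalgebra is $\Cl(V_0)\cong Q(S)$ equivariantly, where $S$ is the spin module. So in the Grothendieck ring of $\operatorname{Rep}\operatorname{Spin}(2k-1)\boxtimes\sVec$,
\[
[\wedge V_0]=[S][S^*](1+[\psi]).
\]
I would apply the Verlinde fiber functor $\Ver_p(\operatorname{Spin}(2k-1))\to\Ver_p$, which induces a ring map on Grothendieck rings sending $V_0$ to $L_{2k-1}$ and $S$ to $S_k$. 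I also need that it sends $\wedge V_0\otimes\psi$, i.e.\ $\wedge(V_0)$ regarded with its odd parity, to $S(L_{2k-1}\otimes\psi)=SV$. This yields $[X][X^*]=[S_k][S_k^*]$. With semisimplicity and positivity this should force $X\cong S_k$ or $X\cong S_k\otimes\psi$. A cleaner alternative is to transport the equivariant isomorphism $\Cl(V_0)\cong Q(S)$ itself through the fiber functor, since all objects involved ($V_0$, $S$, and the fundamental wedge powers) should be tilting for these small weights.

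\emph{Main obstacle.} I expect the hardest step to be the compatibility of the Verlinde fiber functor with exterior and Clifford constructions. That functor is only defined on tilting modules modulo negligibles, so one must check that the exterior powers $\wedge^iV_0$ for $i\le 2k-1\le p-2$ are tilting, and that their images are the symmetric powers of $L_{2k-1}\otimes\psi$. If this proves delicate, I would instead identify $X$ as the unique simple $A(V)$-module (via the Morita equivalence with $\Ver_p^+$) and compute its class directly from the character of $SV$ in $\Ver_p$.
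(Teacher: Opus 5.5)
\textbf{The gap is in ``Deciding the $Q$ case''.} The top symbol $S^{m(V)}V\subset A(V)$ does \emph{not} commute with $V$. With $m=m_0\circ E$, only the $k\le 1$ terms of $E$ act on $V\otimes S^jV$, and skew-symmetry of $B$ gives, on $V\otimes S^jV$,
\[
m-m\circ c_{V,S^jV}=m_0\circ D_B,\qquad m+m\circ c_{V,S^jV}=2m_0 .
\]
For $j=m(V)=2k-1<p$ the commutator is the contraction $V\otimes S^{m(V)}V\to S^{m(V)-1}V$. It is nonzero by the same injectivity of $\Delta_{1,m(V)-1}$ that drives Proposition~\ref{simplic}(i), and nothing in Lemma~\ref{psisq} cancels it. What vanishes is the anticommutator, because $S^{m(V)+1}V=0$.

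Your claim is also inconsistent with what you cite. A copy of $\psi$ in the center of $A(V)$ would contradict the centrality of the Azumaya algebra $A(V)$ (Corollary~\ref{simplic2}(iii)). Moreover, $Q(X)$ is itself Azumaya with center $\mathbf 1$: already in $Q(\mathbf 1)$, $\psi$ fails to commute with itself since $c_{\psi,\psi}=-\id$. So the braided center cannot separate $\underline{\End}(X)$ from $Q(X)$.

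The classical fact you are imitating is that $e_1\cdots e_{2k-1}$ lies in the \emph{ungraded} center. Its correct categorical form is as follows. $V$ and $\psi$ are both odd for $\Ver_p=\Ver_p^+\boxtimes\sVec$, so anticommutation for $c$ is commutation for the modified symmetric braiding $c'$ of $\Ver_p^+\boxtimes\operatorname{Rep}(\mathbb Z/2)$, which has the same underlying tensor category. Hence $\psi$ lies in the $c'$-center of $A(V)$. On the other hand, $\underline{\End}(X)$ is Azumaya for $c'$ as well and has $c'$-center $\mathbf 1$. With this repair the dichotomy is settled.

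\textbf{The identification of $X$ also needs care.} PBW plus the fiber functor give only
\[
[X][X^*](1+[\psi])=[S_k][S_k^*](1+[\psi]),
\]
and $1+[\psi]$ is a zero divisor in $K_0(\Ver_p)$, so you cannot cancel it. Instead push to $K_0(\Ver_p^+)$ via $[\psi]\mapsto 1$. This ring embeds in the field $\mathbb Q(\zeta_p+\zeta_p^{-1})$ and its simples are self-dual, so $[\bar X]^2=[\bar S_k]^2$ forces $\bar X\cong\bar S_k$ by positivity. That suffices, because $Q(X)$ depends only on the image $\bar X$ of $X$ in $\Ver_p^+$. You do not get ``$X\cong S_k$ or $S_k\otimes\psi$'', and you do not need it.

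\textbf{Your fallback is the paper's proof.} The \emph{cleaner alternative} you mention is exactly the paper's argument, and it makes both the classification and the central-element step unnecessary. The objects $U$, $S$ and $\wedge^iU$ for $i<p$ are tilting. The symmetric monoidal fiber functor $\Ver_p(\operatorname{Spin}(2k-1))\to\Ver_p$ carries the equivariant isomorphism $\Cl(U)\cong Q(S)$ directly to $A_\psi(L_{2k-1})\cong Q(S_k)$. Lemma~\ref{leweyl1}(i) then identifies the left side with $A(L_{p-2k+1})$.
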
 

\begin{proof} 
 Let $U$ be the vector representation of ${\rm Spin}(2k-1)$ and $S$ its spin representation, and let ${\rm Cl}(U)$ be the (classical) Clifford algebra of $U$. In the classical tensor category of rational $\operatorname{Spin}(2k-1)$-modules, Clifford multiplication gives an isomorphism of equivariant associative superalgebras
$
\Cl(U)\cong Q(S).
$
For $2k-1<p$ the objects $U$ and $S$ are tilting and their images in the Verlinde category $\Ver_p(\operatorname{Spin}(2k-1))$ are non-negligible.
The fiber functor
$\Ver_p({\rm Spin}(2k-1))\longrightarrow \Ver_p$ (restriction along the principal homomorphism $SL(2)\to\operatorname{Spin}(2k-1)$ followed by semisimplification) sends $U$ to $L_{2k-1}$
 and $S$ to $S_k$, and it preserves the Clifford multiplication and the queer-algebra construction. Hence the image of the classical isomorphism is precisely $A_\psi(L_{2k-1})\cong Q(S_k)$.
Finally, since $L_{p-1}=\psi$ and $L_{2k-1}\otimes\psi\cong L_{p-2k+1}$ in $\Ver_p$, Lemma \ref{leweyl1}(i) identifies $A_\psi(L_{2k-1})$ with the Weyl algebra $A(L_{p-2k+1})$.
\end{proof} 

\subsection{The symplectic Witt group} 

Let $\C$ be a Frobenius exact symmetric tensor category. Let $V\in \C$ be a symplectic object, and assume that $SV$ has finite length. In this case by Corollary \ref{simplic2}(iii), $A(V)$ is an Azumaya algebra in $\C$. 
Thus it defines a class $C_V$ in the Brauer group $\Br(\C)$. 

Now assume that $\C$ is finite. Since $A(V)$ is a simple, hence exact, algebra in $\C$, the category $\C_V:=A(V)$-mod is an indecomposable exact $\C$-module category. Thus by Corollary~\ref{simplic2}(iii) $\C_V$ defines an element $\C_V$ in the Picard group $\Pic(\C)$ (\cite{ENO}, Subsection 4.4). In fact, the canonical isomorphism $\Br(\C)\cong \Pic(\C)$ maps $C_V$ to $\C_V$.  

Let ${\mathcal S\mathcal W}(\C)$ be the group generated by the classes $[V]$ of symplectic objects $V$ in $\C$ with $SV$ of finite length under direct sum, with defining relation $[V]=0$ if $A(V)\cong \underline{\End}S$ for some $S\in \C$. This is an elementary abelian 2-group (an $\Bbb F_2$-vector space). We call ${\mathcal S\mathcal W}(\C)$ the {\it symplectic Witt group} of $\C$. 

Let $\Br_2(\C)$ be the 2-torsion subgroup of $\Br(\C)$. 

\begin{proposition}\label{symwitt0} The assignment $V\mapsto C_V$ defines a subgroup inclusion 
$$
\iota: {\mathcal S\mathcal W}(\C)\hookrightarrow \Br_2(\C).
$$ 
\end{proposition}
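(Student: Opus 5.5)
We have to show three things: the assignment $V\mapsto C_V$ respects direct sums, it kills the defining relations of ${\mathcal S\mathcal W}(\C)$, and it is injective with image in $\Br_2(\C)$. We also have to know that ${\mathcal S\mathcal W}(\C)$ really is a group in which every element has order at most $2$.

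\emph{Well-definedness and additivity.} By Corollary \ref{simplic2}(iii), each symplectic $V$ with $SV$ of finite length gives an Azumaya algebra $A(V)$, so $C_V\in\Br(\C)$ is defined. If $V,W$ are two such objects, then $S(V\oplus W)=SV\otimes SW$ still has finite length. Lemma \ref{leweyl1}(ii) gives $A(V\oplus W)\cong A(V)\otimes A(W)$, and since the product in $\Br(\C)$ is induced by tensor product of Azumaya algebras, $C_{V\oplus W}=C_V C_W$. Isometric objects give isomorphic algebras. If $A(V)\cong\underline{\End}(S)$, then $A(V)$ is Morita trivial, so $C_V=1$. Hence $V\mapsto C_V$ descends to a homomorphism $\iota$ from the group generated by the $[V]$ modulo the relations.

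\emph{2-torsion.} By the discussion in Section 2.1, $A(V)^{\rm op}=A(V,-B)\cong A(V)$, where the isomorphism rescales $V$ by $i=\sqrt{-1}$. In $\Br(\C)$ the inverse of $[A]$ is $[A^{\rm op}]$, so $C_V^{-1}=C_V$ and $C_V^2=1$. Thus the image of $\iota$ lies in $\Br_2(\C)$. Correspondingly, in ${\mathcal S\mathcal W}(\C)$ we have $2[V]=[V\oplus V]$. The proof of Corollary \ref{simplic2}(iii) shows that $(V,B)\oplus(V,-B)\cong V\oplus V^*$ with the hyperbolic form, and $(V,-B)\cong(V,B)$ via rescaling by $i$. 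By Corollary \ref{simplic2}(ii), $A(V\oplus V^*)\cong\underline{\End}(SV)$, so $2[V]=0$ and ${\mathcal S\mathcal W}(\C)$ is an $\mathbb F_2$-vector space. Inverses are therefore automatic, and every element of the group is represented by a single symplectic object $V$.

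\emph{Injectivity.} This is the main point. Since every element is a single class $[V]$, it suffices to show that $C_V=1$ implies $[V]=0$. The triviality $C_V=1$ means that $A(V)$ is Morita equivalent to $\mathbf 1$, i.e.\ $A(V)$-mod $\simeq\C$ as $\C$-module categories. By the reconstruction used in the classification of simple algebras in $\Ver_p$ (cf.\ \cite[Subsection~7.10]{EGNO}), let $S\in\C$ be the object corresponding to the regular $A(V)$-module under this equivalence. Then
$$A(V)\cong\underline{\End}_{\C}(S)=S\otimes S^*=\underline{\End}(S),$$
and this is exactly the defining relation $[V]=0$. Alternatively, one can argue directly: Morita triviality of an Azumaya algebra $A$ means $A\otimes\underline{\End}(X)\cong\underline{\End}(Y)$ for some $X,Y$, and one then needs to extract $A\cong\underline{\End}(S)$ from this. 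The module-category formulation avoids that extraction. I expect the main subtlety to be exactly here: the generator must be the image of the regular module, and the identification must be with the internal End taken in $\C$ itself. The module-category argument handles this cleanly, since it uses only that the equivalence is one of $\C$-module categories.
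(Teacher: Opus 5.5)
Your proof is correct and follows essentially the same route as the paper: additivity via Lemma~\ref{leweyl1}(ii), landing in $\Br_2(\C)$ because $A(V)\cong A(V)^{\rm op}$ and $A(V)$ is Azumaya, and injectivity by applying reconstruction to the regular module under an equivalence $A(V)\text{-mod}\simeq\C$, which gives $A(V)\cong\underline{\End}(S)$. The only small difference is that you explicitly verify $2[V]=[V\oplus V]=0$ in ${\mathcal S\mathcal W}(\C)$, using the hyperbolic form and Corollary~\ref{simplic2}(ii), before reducing injectivity to a single class; the paper instead collapses a sum $[V_1]+\cdots+[V_r]$ into $[V_1\oplus\cdots\oplus V_r]$ and relies on the elementary-abelian property asserted in the definition, so your version is slightly more self-contained.
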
 

\begin{proof}
By Lemma~\ref{leweyl1}(ii),
\[
 A(V\oplus W)\cong A(V)\otimes A(W),
\]
so $V\mapsto[A(V)\text{-mod}]$ is additive. If $A(V)\cong\underline{\End}(S)$, its Brauer class is trivial, hence the defining relations of ${\mathcal S\mathcal W}(\C)$ lie in the kernel. Moreover, since $A(V)\cong A(V)^{\rm op}$, 
\[
 A(V)\otimes A(V)\cong A(V)\otimes A(V)^{\rm op}
 \cong\underline{\End}(A(V)),
\]
where the second isomorphism is the Azumaya property from Corollary~\ref{simplic2}(iii); thus $2[V]$ maps to zero. This gives a homomorphism 
$$
\iota:{\mathcal S\mathcal W}(\C)\to\Br_2(\C).
$$

It remains to check injectivity. If a finite sum $[V_1]+\cdots+[V_r]$ maps to zero, set $V=V_1\oplus\cdots\oplus V_r$. Then $A(V)$ is Brauer-trivial. An equivalence $A(V)\text{-mod}\simeq\C$ sends the free rank-one module to an object $S\in\C$, and reconstruction gives $A(V)\cong\underline{\End}(S)$. Hence $[V]=0$ is already one of the defining relations of ${\mathcal S\mathcal W}(\C)$. Therefore $\iota$ is injective.
\end{proof}

\subsection{The symplectic Witt group of ${\rm Rep}(G)\boxtimes \sVec$ for a finite group $G$} 
Let $G$ be a finite group whose order is coprime to ${\rm char}(\k)$ if it is $>0$. Let $\C={\rm Rep}(G)\boxtimes \sVec$ and $\Gamma:=G\times \Bbb Z/2$ where $\Bbb Z/2$ is generated by an element $t$.
 Thus $\C={\rm Rep}(\Gamma,t)$, the super-Tannakian category of representations of $G$ on supervector spaces; the element $t$ is defined to act on such a representation by the parity operator. 
In this case by Carnovale's theorem (\cite{DN}, Theorem 6.5) 
$$
\Pic(\C)=H^2_*(\Gamma,\k^\times)\times \mu_2,
$$ 
with $H^2_*$ denoting $H^2$ with modified multiplication
 defined at the level of 2-cocycles 
by the formula
$$
(\beta*\gamma)(x,y)=(-1)^{\varepsilon_\beta(x)\varepsilon_\gamma(y)}\beta(x,y)\gamma(x,y),
$$
where $\varepsilon_\beta: \Gamma\to \Bbb Z/2$  is the homomorphism 
 such that
 $$
 (-1)^{\varepsilon_\beta(x)}=\beta(x,t)/\beta(t,x)\in \mu_2
$$
(it depends only on the cohomology class of $\beta$). 

If $X$ is a representation of $G$, denote by $X_-$ its extension to a representation of $\Gamma$ by letting $t$ act by $-1$. 
Also recall that ${\mathcal S\mathcal W}(\C)$ is generated by $[X\otimes \psi]$ 
where $X$ is an orthogonal representation of $G$. Let 
$\widetilde X:=X_-\oplus \k_-^{\dim X}$; thus the action of $\Gamma$ on $\widetilde X$ is given by 
$$
\widetilde\rho(g,t^a):=(-1)^a(\rho(g)\oplus \id_{\k^{\dim X}})\in O(\widetilde X).
$$ 
This homomorphism gives rise to a class $\beta_X\in H^2_*(G\times \Bbb Z/2,\k^\times)$ 
which is the pullback by $\widetilde\rho$ of the central extension 
$$
1\to \k^\times\to {\rm Cliff}(\widetilde X)\to O(\widetilde X)\to 1,
$$
where ${\rm Cliff}(\widetilde X)$ is the Clifford group of $\widetilde X$. 

\begin{proposition}\label{symwitt} The map $\iota$ is given by 
$$
\iota([X\otimes \psi])=(\beta_X,(-1)^{\dim X}),
$$ 
and it is a group homomorphism. Thus $\iota({\mathcal S\mathcal W}(\C))=\mathcal S\mathcal W_+(\C)\times \mu_2$, where $\mathcal S\mathcal W_+(\C)$ 
is the subgroup of $H^2_*(G\times \Bbb Z/2,\k^\times)$ generated by the $\beta_X$. 
\end{proposition}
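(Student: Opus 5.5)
We compute the Brauer class of $A(X\otimes\psi)$ directly. By Lemma~\ref{leweyl1}(i) (with $\psi\otimes\psi\cong\mathbf 1$), the Weyl algebra $A(X\otimes\psi)$ is the Clifford algebra $A_\psi(X)$. Concretely, it is the classical Clifford superalgebra $\Cl(X)$ of the orthogonal $G$-representation $X$, viewed in $\C={\rm Rep}(\Gamma,t)$ with its $\mathbb Z/2$-grading realized by $t$.

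The plan is to produce an explicit module realizing $\Cl(X)$ as a twisted endomorphism algebra. First, stabilize by the trivial orthogonal space. The object $\k^{\dim X}\otimes\psi$ with a split form has Weyl algebra $\underline{\End}(S)$: its Clifford algebra is $\Cl(\k^{2n})$-type with trivial $G$-action, and it is Brauer trivial by Corollary~\ref{simplic2}(ii) applied to a Lagrangian. Therefore
\[
[X\otimes\psi]=[(X\oplus\k^{\dim X})\otimes\psi],
\]
and the latter Clifford algebra is $\Cl(\widetilde X)$ with the super-grading included. Since $\dim\widetilde X=2\dim X$ is even, $\Cl(\widetilde X)\cong\End(\Sigma)$ for the spinor superspace $\Sigma$ as a superalgebra. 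The group $\Gamma$ acts on $\Cl(\widetilde X)$ through $\widetilde\rho:\Gamma\to O(\widetilde X)$; here $t$ acts by $-1$, i.e. by the parity automorphism, consistent with $\C={\rm Rep}(\Gamma,t)$.

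Next, lift the action to $\Sigma$. By Skolem--Noether, each automorphism of $\End(\Sigma)$ coming from $O(\widetilde X)$ is conjugation by an element of the Clifford group, unique up to $\k^\times$. Thus $\Sigma$ is a projective representation of $\Gamma$ whose cocycle is the pullback of the extension ${\rm Cliff}(\widetilde X)\to O(\widetilde X)$, namely $\beta_X$. Under Carnovale's description $\Pic(\C)=H^2_*(\Gamma,\k^\times)\times\mu_2$ from \cite{DN}, an Azumaya algebra $\underline{\End}$ of a $\beta$-projective superrepresentation maps to $(\beta,\pm1)$. The $\mu_2$ sign is the parity of the implementing element for $t$, i.e. whether $t$ lifts to an even or odd element. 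Because $t\mapsto -\id$ on a space of dimension $2\dim X$, it lifts to the volume element, whose parity is $\dim X\bmod 2$ up to the normalization built into \cite{DN}. This gives $(-1)^{\dim X}$.

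The main obstacle is to match conventions precisely with \cite{DN}, Theorem~6.5: where the $\mu_2$ factor sits, and that the super-twisted product $*$ is what appears. I would handle this by checking that $\iota$ is additive. Additivity follows from Proposition~\ref{symwitt0}, since $\iota$ is a homomorphism, combined with the computation that $\Cl(\widetilde X\oplus\widetilde Y)=\Cl(\widetilde X)\hat\otimes\Cl(\widetilde Y)$ (Lemma~\ref{leweyl1}(ii)). The super tensor product of projective superrepresentations multiplies cocycles with exactly the sign $(-1)^{\varepsilon_\beta(x)\varepsilon_\gamma(y)}$. Here $\varepsilon_\beta(x)$ records whether the lift of $x$ anticommutes with the lift of $t$, i.e. its parity. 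This confirms that the formula is a homomorphism into $H^2_*\times\mu_2$.

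The image is then generated by the pairs $(\beta_X,(-1)^{\dim X})$. Since $X=\k$ gives $(\beta_\k,-1)$, and $\beta_\k$ is trivial, the image splits as $\mathcal S\mathcal W_+(\C)\times\mu_2$. Triviality of $\beta_\k$ holds because the $\Gamma$-action on $\Cl(\k^2)$ via $\pm1$ lifts to a genuine representation using the volume element.
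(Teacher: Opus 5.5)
\textbf{There is a genuine gap: your handling of the $\mu_2$-coordinate rests on two false intermediate claims. They cancel each other, so your final formula is right but the argument does not prove it.}

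\emph{The stabilization step fails when $n=\dim X$ is odd.} The summand $\k^n\otimes\psi$ (trivial $G$-action, standard form) has a Lagrangian only for $n$ even. For $n$ odd, $A(\k^n\otimes\psi)=\Cl(\k_-^{\,n})$ forgets to $\Cl(n)$ in $\sVec$, which is the nontrivial Brauer--Wall class, so it is not of the form $\underline{\End}(S)$. (You also call it ``$\Cl(\k^{2n})$-type'', but the space is $n$-dimensional.) Your last paragraph contradicts this step: for $n=1$ the step would give $[\psi]=0$, whereas you correctly use $\iota([\psi])=(1,-1)\neq 1$.

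\emph{The parity claim for $\Cl(\widetilde X)$ is also wrong.} Since $\dim\widetilde X=2n$, its volume element is a product of an even number of odd generators, so it is even for every $n$; the paper notes $p(t)=0$. As $\Cl(\widetilde X)\cong\underline{\End}(S)$ with $t$ acting as the parity of $S$, its class is $(\beta_X,1)$, not $(\beta_X,(-1)^n)$. In effect you have conflated $\Cl(X_-)$, whose volume element has parity $n$, with $\Cl(\widetilde X)$.

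\emph{How to repair it.} Relate the two algebras as the paper does, rather than by a stabilization that discards information. Use the graded decomposition $\Cl(\widetilde X)\cong\Cl(X_-)\otimes\Cl(\k_-)^{\otimes n}$. Then show $[\Cl(\k_-)\text{-mod}]=(1,-1)$: its Brauer--Wall class is nontrivial, and its $H^2$-part is trivial because the action factors through $\langle t\rangle$ and $H^2(\langle t\rangle,\k^\times)=0$. This gives $[\Cl(X_-)\text{-mod}]=(\beta_X,1)\,(1,-1)^n=(\beta_X,(-1)^n)$. Alternatively, read off the $\mu_2$-coordinate directly as the Brauer--Wall class of $\Cl(X_-)$ after forgetting $G$; this is the graded tensor product of $n$ copies of $\Cl(1)$.

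\emph{A smaller gap in the lifting step.} The Clifford-group map is the twisted adjoint action, so when $\det\widetilde\rho(\gamma)=-1$, plain conjugation by $u_\gamma$ induces $-\widetilde\rho(\gamma)$, not $\widetilde\rho(\gamma)$. The actual Skolem--Noether implementers are $T_\gamma=i^{p(\gamma)}\omega^{p(\gamma)}u_\gamma$. Before invoking the description in \cite{DN}, you must check two things, as the paper does. First, the multiplier of the $T_\gamma$ is still $b$; this uses $u_\gamma\omega=(-1)^{p(\gamma)}\omega u_\gamma$. Second, $T_t=\omega$ acts on $S$ as the parity operator. Your sentence that the cocycle is ``namely $\beta_X$'' skips both checks.
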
 

\begin{proof} Under the equivalence
\[
\operatorname{Rep}(G)\boxtimes\sVec\simeq \operatorname{Rep}(\Gamma,t),
\]
the algebra $A(X\otimes\psi)=A_\psi(X)$ is the usual Clifford superalgebra $\Cl(X_-)$. 
Write $n=\dim X$. The image of $[\Cl(X_-)\text{-mod}]$ under restriction to $\operatorname{Pic}(\sVec)=\mu_2$ is $(-1)^n$: after forgetting the $G$-action, $\Cl(X_-)$ is the graded tensor product of $n$ copies of the one-generator odd Clifford algebra, which represents the nontrivial element of $\operatorname{Pic}(\sVec)$.

We next compute the first coordinate. Choose homogeneous lifts $u_\gamma\in\operatorname{Cliff}({\widetilde X})$ of $\widetilde\rho(\gamma)$, $\gamma\in\Gamma$, and write
\[
u_\gamma u_\delta=b(\gamma,\delta)u_{\gamma\delta}.
\]
Then $[b]=\beta_X$. Let $p(\gamma)=|u_\gamma|\in \Bbb Z/2$, i.e., $\det \widetilde \rho(\gamma)=(-1)^{p(\gamma)}$; thus $p: \Gamma\to \Bbb Z/2$ is a group homomorphism. Choose a normalized volume element $\omega\in\Cl({\widetilde X})$ with $\omega^2=1$. It is even and satisfies
\[
\omega a=(-1)^{|a|}a\omega
\]
for every homogeneous $a\in\Cl({\widetilde X})$. Choose $i\in\k$ with $i^2=-1$ and define
\[
T_\gamma:=i^{p(\gamma)}\omega^{p(\gamma)}u_\gamma.
\]
The Clifford-group map is the twisted adjoint map, so for $v\in {\widetilde X}$,
\[
\widetilde\rho(\gamma)(v)=(-1)^{p(\gamma)}u_\gamma vu_\gamma^{-1}.
\]
It follows from the defining property of $\omega$ that
\[
T_\gamma vT_\gamma^{-1}=\widetilde\rho(\gamma)(v).
\]
Moreover, since $u_\gamma\omega^{p(\delta)}=(-1)^{p(\gamma)p(\delta)}\omega^{p(\delta)}u_\gamma$, $\omega^2=1$ and $i^2=-1$, we obtain
\[
T_\gamma T_\delta=b(\gamma,\delta)T_{\gamma\delta}.
\]
Now, since $\widetilde X$ is even-dimensional, $\Cl({\widetilde X})$ is a matrix superalgebra, therefore $\Cl({\widetilde X})\cong\underline{\operatorname{End}}(S)$ for a graded spinor module $S$. Since $\widetilde\rho(t)=-\id_{\widetilde X}$ and $p(t)=0$, we may normalize the lift $u_t$ to be the volume element $\omega$; then $T_t=\omega$ acts on $S$ as the parity operator. Thus the grading on $S$ is exactly the one determined by $t$, as required in the projective-representation description of the $H^2$-coordinate in the proof of \cite[Theorem~6.5]{DN}.
 Under this isomorphism the $\Gamma$-action on $\Cl({\widetilde X})$ is induced by the projective action $T_\gamma$ on $S$, whose multiplier is $b$. By the kernel description in the proof of \cite[Theorem~6.5]{DN}, it follows that
\[
[\Cl({\widetilde X})\text{-mod}]=(\beta_X,1).
\]

On the other hand, the orthogonal direct-sum formula for Clifford algebras gives a $\Gamma$-equivariant isomorphism
\[
\Cl({\widetilde X})\cong\Cl(X_-)\otimes\Cl(\k_-)^{\otimes n},
\]
where the tensor products are graded. The algebra $\Cl(\k_-)=\mathbf 1\oplus\k_-$ represents $(1,-1)$: its restriction to $\sVec$ is the nontrivial Brauer--Wall class, while its first coordinate is trivial because its $\Gamma$-action factors through $\langle t\rangle$ and $H^2(\langle t\rangle,\k^\times)$ vanishes. Consequently
\[
(\beta_X,1)=[\Cl({\widetilde X})\text{-mod}]
=[\Cl(X_-)\text{-mod}]\,(1,-1)^n,
\]
and hence\footnote{Since $\iota$ is a group homomorphism by Proposition~\ref{symwitt0},
this formula is compatible with direct sums.}
\[
\iota([X\otimes\psi])=[\Cl(X_-)\text{-mod}]
=(\beta_X,(-1)^n).
\]

Finally, for $X=\k$, the stabilized representation ${\widetilde X}=\k_-\oplus\k_-$ factors through $\langle t\rangle$, so $\beta_\k=1$ and
\[
\iota([\psi])=(1,-1).
\]
Thus the full factor $\mu_2$ lies in the image. For arbitrary $X$,
\[
(\beta_X,1)=\iota([X\otimes\psi])\,(1,-1)^n
\]
also lies in the image. Hence, identifying ${\mathcal S\mathcal W}(\C)$ with its image under $\iota$, that image is exactly $\mathcal S\mathcal W_+(\C)\times\mu_2$.
\end{proof}

\begin{remark}\label{precrel} The proof of Proposition \ref{symwitt} gives the precise relation
 with the pullback of the Clifford-group extension for the unstabilized representation $X_-$.  Namely, the direct-product decomposition $\Gamma=G\times\langle t\rangle$ gives an isomorphism of ordinary cohomology groups
\[
\Theta:H^2(\Gamma,\k^\times)\xrightarrow{\ \sim\ }
H^2(G,\k^\times)\times\Hom(G,\mu_2)
\]
characterized by
\[
\Theta([c])=
\left([c|_{G\times G}],\ g\longmapsto\frac{c(t,g)}{c(g,t)}\right).
\]
Indeed, $H^2(\langle t\rangle,\k^\times)$ vanishes, and the second factor is the mixed term.  Let $b_X^{\rm raw}$ be the pullback class along
\[
\Gamma\longrightarrow O(X),\qquad (g,t^a)\longmapsto(-1)^a\rho(g),
\]
and let $b_G(X)\in H^2(G,\k^\times)$ be the pullback class along $\rho$.  More generally, if $U$ is an $m$-dimensional orthogonal $\Gamma$-representation on which $t$ acts by $-1$, a volume element $\omega_U$ is a Clifford lift of $t$, and for a homogeneous lift $u_g$ of $g\in G$ one has
\[
\omega_Uu_g=(-1)^{(m-1)|u_g|}u_g\omega_U,
\qquad
(-1)^{|u_g|}=\det(g|_U).
\]
Consequently
\[
\frac{\widetilde b_U(t,g)}{\widetilde b_U(g,t)}=\det(g|_U)^{m-1},
\]
where $\widetilde b_U$ is the cocycle obtained from the chosen Clifford lifts for $b_U^{\rm raw}$.
Applying this first to $U=X_-$ and then to $U=\widetilde X$, and observing that the stabilizing summand is $G$-trivial, gives
\[
\Theta(b_X^{\rm raw})=\bigl(b_G(X),{\rm det}_X^{n-1}\bigr),
\qquad
\Theta(\beta_X)=\bigl(b_G(X),{\rm det}_X\bigr).
\]
For $\chi\in\Hom(G,\mu_2)$ define the mixed cocycle
\[
\kappa_\chi\bigl((g,t^a),(h,t^b)\bigr):=\chi(h)^a.
\]
It has trivial restriction to $G\times G$ and satisfies
\[
\frac{\kappa_\chi(t,g)}{\kappa_\chi(g,t)}=\chi(g).
\]
Thus
\[
\beta_X=b_X^{\rm raw}\,\kappa_{\det_X}^{\dim X}
\quad\text{in }H^2(\Gamma,\k^\times).
\]
In particular, the raw pullback is already the first Picard coordinate when $n$ is even, whereas in odd dimension it must be corrected by $\kappa_{\det_X}$.

For two orthogonal $G$-representations $X,Y$, the isomorphism
\[
\Cl((X\oplus Y)_-)\cong\Cl(X_-)\otimes\Cl(Y_-)
\]
shows that these classes multiply. Equivalently,
\[
\beta_{X\oplus Y}=\beta_X*\beta_Y,
\]
and the second coordinates satisfy
\[
(-1)^{\dim(X\oplus Y)}=(-1)^{\dim X}(-1)^{\dim Y}.
\]
This gives a direct proof that the formula of Proposition \ref{symwitt} defines a group homomorphism.
\end{remark} 

\subsection{The Stiefel--Whitney subgroup}

Let $\k=\Bbb C$. Put
\[
A:=H^2(G,\k^\times),\qquad K:=\Hom(G,\Bbb Z/2)=H^1(G,\Bbb Z/2),
\]
and let
\[
j:H^2(G,\Bbb Z/2)\longrightarrow H^2(G,\k^\times)
\]
be induced by the inclusion $\Bbb Z/2\hookrightarrow\k^\times$. Under the ordinary K\"unneth isomorphism
\[
\Theta:H^2(\Gamma,\k^\times)\xrightarrow{\sim}A\times K
\]
from Remark \ref{precrel}, the modified multiplication is
\begin{equation}\label{modifiedKunneth}
(a,\chi)*(b,\eta)=\bigl(abj(\chi\smile\eta),\chi+\eta\bigr).
\end{equation}
Indeed, the extra sign in the definition of $*$ restricts to $G\times G$ as the cocycle representing $j(\chi\smile\eta)$. 
Thus, we have a canonical short exact sequence of abelian groups 
$$
1\to H^2(G,\k^\times)\to H^2_*(G\times \Bbb Z/2,\k^\times)\to \Hom(G,\Bbb Z/2)\to 1.
$$
Notice that
\begin{equation}\label{cupsquarezero}
j(\chi\smile\chi)=1
\end{equation}
for every $\chi\in K$; indeed, the cocycle $(-1)^{\chi(g)\chi(h)}$ is the coboundary of the cochain $g\mapsto i^{\chi(g)}$, where $i^2=-1$.

For an orthogonal representation $X$ of $G$, let $w_1(X)\in K$ and $w_2(X)\in H^2(G,\Bbb Z/2)$ be its first and second Stiefel--Whitney classes of $X$ (i.e., of the underlying orthogonal vector bundle $E_X$ on $BG$); thus $\det_X=(-1)^{w_1(X)}$. Then
\[
b_G(X):=j(w_2(X))\in A[2],
\qquad
\Theta(\beta_X)=\bigl(b_G(X),w_1(X)\bigr).
\]
Define
\[
{\mathcal S\mathcal W}(G):=
\left\langle b_G(X)\ \middle|\ X\text{ is an orthogonal representation of }G\right\rangle
\subset A[2].
\]
We call this the \emph{Stiefel--Whitney subgroup} of $G$.

\begin{proposition}\label{SWproductstructure} (Symplectic Witt via Stiefel-Whitney)
Under $\Theta$ one has
\[
\mathcal S\mathcal W_+(\C)={\mathcal S\mathcal W}(G)\times K
\]
as a set. In particular, $\mathcal S\mathcal W_+(\C)$ is also a subgroup of $H^2(\Gamma,\k^\times)$ for the ordinary multiplication.

Moreover, let
\[
D(G):=
\left\langle j(\chi\smile\eta)\ \middle|\ \chi,\eta\in K\right\rangle
\subset A.
\]
Then $D(G)\subset {\mathcal S\mathcal W}(G)$, and
\[
L:=D(G)\times K\subset \mathcal S\mathcal W_+(\C)
\]
is a subgroup for the modified multiplication. Namely, it is the subgroup generated by the elements $(1,\chi)$, $\chi\in K$. 
\end{proposition}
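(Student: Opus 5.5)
The plan is to work entirely in the coordinates $\Theta$, using the formula \eqref{modifiedKunneth} for the modified product together with $\Theta(\beta_X)=(b_G(X),w_1(X))$. By Proposition \ref{symwitt}, $\mathcal S\mathcal W_+(\C)$ is the subgroup of $H^2_*(\Gamma,\k^\times)$ generated under $*$ by the $\beta_X$. I will prove the two inclusions separately and then handle $L$.

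\emph{Step 1: $D(G)\subset{\mathcal S\mathcal W}(G)$.} I will use the Whitney sum formula for the orthogonal bundles $E_X$ on $BG$:
\[
w_2(X\oplus Y)=w_2(X)+w_2(Y)+w_1(X)\smile w_1(Y).
\]
Applying $j$ gives
\[
j(w_1(X)\smile w_1(Y))=b_G(X\oplus Y)\,b_G(X)^{-1}b_G(Y)^{-1}\in{\mathcal S\mathcal W}(G).
\]
Every $\chi\in K$ is $w_1$ of the one-dimensional orthogonal representation $\k_\chi$, on which $G$ acts by $(-1)^{\chi}$. Taking $X=\k_\chi$ and $Y=\k_\eta$ therefore shows $j(\chi\smile\eta)\in{\mathcal S\mathcal W}(G)$. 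Since ${\mathcal S\mathcal W}(G)$ is a group, this gives $D(G)\subset{\mathcal S\mathcal W}(G)$. I also note that $w_2(\k_\chi)=0$, so $\Theta(\beta_{\k_\chi})=(1,\chi)$.

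\emph{Step 2: the equality $\mathcal S\mathcal W_+(\C)={\mathcal S\mathcal W}(G)\times K$.} First, ${\mathcal S\mathcal W}(G)\times K$ is closed under $*$: by \eqref{modifiedKunneth} the correction term $j(\chi\smile\eta)$ lies in $D(G)\subset{\mathcal S\mathcal W}(G)$. The set also contains inverses, since the $*$-inverse of $(a,\chi)$ is $(a^{-1}j(\chi\smile\chi)^{-1},\chi)=(a^{-1},\chi)$ by \eqref{cupsquarezero}. It contains all generators $\beta_X$, so $\mathcal S\mathcal W_+(\C)\subset{\mathcal S\mathcal W}(G)\times K$.

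For the converse, I will compute
\[
\beta_X*(1,w_1(X))=\bigl(b_G(X)\,j(w_1(X)\smile w_1(X)),0\bigr)=(b_G(X),0),
\]
again using \eqref{cupsquarezero}. Here $(1,w_1(X))=\Theta(\beta_{\k_{w_1(X)}})$ lies in $\mathcal S\mathcal W_+(\C)$, so every $(a,0)$ with $a\in{\mathcal S\mathcal W}(G)$ lies in $\mathcal S\mathcal W_+(\C)$. Finally $(a,0)*(1,\chi)=(a,\chi)$ gives the reverse inclusion. Since ${\mathcal S\mathcal W}(G)\times K$ is a product of subgroups, it is also a subgroup for the ordinary multiplication.

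\emph{Step 3: the subgroup $L$.} The same closure computation as in Step 2, with $D(G)$ in place of ${\mathcal S\mathcal W}(G)$, shows that $L=D(G)\times K$ is closed under $*$ and under inverses. Moreover $L$ contains each $(1,\chi)$. Conversely, the subgroup generated by the $(1,\chi)$ contains
\[
(1,\chi)*(1,\eta)*(1,\chi+\eta)=\bigl(j(\chi\smile\eta)\,j((\chi+\eta)\smile(\chi+\eta)),0\bigr)=(j(\chi\smile\eta),0).
\]
Hence it contains $D(G)\times\{0\}$, and therefore all of $L$ via $(d,0)*(1,\chi)=(d,\chi)$.

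\emph{Expected obstacle.} The only non-formal input is the Whitney sum formula in Step 1. It needs care because $w_i(X)$ is defined through the bundle $E_X$ on $BG$, so the formula applies there directly, but one must confirm it matches the cocycle-level identification of $b_G(X)=j(w_2(X))$ as the pullback of the Clifford extension. If one prefers a purely algebraic route, the same identity can be extracted from Remark \ref{precrel}. There, $\beta_{X\oplus Y}=\beta_X*\beta_Y$, and comparing first coordinates via \eqref{modifiedKunneth} yields
\[
b_G(X\oplus Y)=b_G(X)\,b_G(Y)\,j(w_1(X)\smile w_1(Y))
\]
directly. I would use this second route to avoid topology.
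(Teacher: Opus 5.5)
Your proposal is correct and follows essentially the same route as the paper. The Whitney sum formula gives $D(G)\subset{\mathcal S\mathcal W}(G)$, hence closure of ${\mathcal S\mathcal W}(G)\times K$ under $*$; the identity $\Theta(\beta_X)*(1,w_1(X))=(b_G(X),0)$ with $\Theta(\beta_{\k_\chi})=(1,\chi)$ gives the reverse inclusion; and $(1,\chi)*(1,\eta)*(1,\chi+\eta)=(j(\chi\smile\eta),0)$ handles $L$. Your alternative derivation of the Whitney-sum identity from $\beta_{X\oplus Y}=\beta_X*\beta_Y$ in Remark~\ref{precrel} is also valid, though it still relies on the identification $\Theta(\beta_X)=(j(w_2(X)),w_1(X))$ made just before the proposition, so it does not fully remove the topological input.
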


\begin{proof} The Whitney sum formula
$$
w_2(X\oplus Y)=w_2(X)+w_2(Y)+w_1(X)\smile w_1(Y)
$$
yields
\begin{equation}\label{Whitneysumdegree2}
b_G(X\oplus Y)=b_G(X)b_G(Y)j\bigl(w_1(X)\smile w_1(Y)\bigr).
\end{equation}
Since every element of $K$ is $w_1$ of an orthogonal line, it follows that $D(G)\subset {\mathcal S\mathcal W}(G)$. Formula \eqref{modifiedKunneth} and Proposition~\ref{symwitt} therefore imply
\[
\mathcal S\mathcal W_+(\C)\subset {\mathcal S\mathcal W}(G)\times K.
\]
Conversely, for every $\chi\in K$, the corresponding orthogonal line $\k_\chi$ satisfies
\[
\Theta(\beta_{\k_\chi})=(1,\chi).
\]
Hence, by \eqref{modifiedKunneth} and \eqref{cupsquarezero},
\[
\Theta(\beta_X)*(1,w_1(X))=(b_G(X),0).
\]
Thus $\mathcal S\mathcal W_+(\C)$ contains ${\mathcal S\mathcal W}(G)\times\{0\}$ and $\{1\}\times K$, and hence contains their Cartesian product. This proves the asserted equality of sets, and the statement about the ordinary multiplication follows immediately.

The subset $L$ is closed under $*$ by \eqref{modifiedKunneth}, and every element of $L$ is its own inverse by \eqref{cupsquarezero}; hence it is a subgroup. Also,
\[
(1,\chi)*(1,\eta)*(1,\chi+\eta)=\bigl(j(\chi\smile\eta),0\bigr),
\]
so $L$ is generated by the elements $(1,\chi)$. 
\end{proof}

\begin{remark} Note that the subset $K=\{(1,\chi)\mid\chi\in K\}$ need not be a subgroup for $*$. Indeed, 
\[
(1,\chi)*(1,\eta)=\bigl(j(\chi\smile\eta),\chi+\eta\bigr),
\]
which need not belong to $K$. For example, for $G=(\Bbb Z/2)^2$ and the two coordinate characters $\chi,\eta$, the class $j(\chi\smile\eta)$ is nonzero, as is seen from the nontrivial commutator of its representing cocycle.
\end{remark}

\begin{corollary}\label{SWsurjectivitycriterion}
(i) One has, as sets,
\[
H^2_*(\Gamma,\k^\times)[2]=H^2(G,\k^\times)[2]\times K
\]
and
\[
\iota\bigl({\mathcal S\mathcal W}(\C)\bigr)
={\mathcal S\mathcal W}(G)\times K\times\mu_2.
\]
Consequently,
\[
\iota:{\mathcal S\mathcal W}(\C)\longrightarrow\Br_2(\C)
\]
is surjective if and only if
\[
{\mathcal S\mathcal W}(G)=H^2(G,\k^\times)[2].
\]

(ii) $\iota$ is surjective if $G$ is abelian. 

(iii) $\iota$ is not always surjective.
\end{corollary}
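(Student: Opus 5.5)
For (i), I would work entirely under the Künneth isomorphism $\Theta$ of Remark~\ref{precrel} and the formula \eqref{modifiedKunneth} for the modified product. By \eqref{modifiedKunneth} and \eqref{cupsquarezero}, the square of $(a,\chi)$ is
\[
(a,\chi)*(a,\chi)=(a^2j(\chi\smile\chi),2\chi)=(a^2,0).
\]
So $(a,\chi)$ is $2$-torsion if and only if $a\in A[2]$, while $\chi\in K$ is arbitrary. This gives $H^2_*(\Gamma,\k^\times)[2]=A[2]\times K$ as sets.

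Since $\C$ is finite, $\Br(\C)\cong\Pic(\C)=H^2_*(\Gamma,\k^\times)\times\mu_2$ by \cite{DN}. Hence
\[
\Br_2(\C)=H^2_*(\Gamma,\k^\times)[2]\times\mu_2 .
\]
Proposition~\ref{symwitt} combined with Proposition~\ref{SWproductstructure} gives $\iota(\mathcal S\mathcal W(\C))=\mathcal S\mathcal W(G)\times K\times\mu_2$. Since $\mathcal S\mathcal W(G)\subset A[2]$, comparing the two product sets shows that $\iota$ is surjective exactly when $\mathcal S\mathcal W(G)=A[2]$.

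For (ii), I would show that $A[2]=D(G)$ when $G$ is abelian; then $D(G)\subset\mathcal S\mathcal W(G)$ from Proposition~\ref{SWproductstructure} finishes. For abelian $G$, a class in $H^2(G,\Bbb C^\times)$ is determined by its commutator pairing $c(g,h)=\beta(g,h)/\beta(h,g)$. This identifies $H^2(G,\Bbb C^\times)$ with the group of alternating bicharacters $G\times G\to\Bbb C^\times$. A $2$-torsion class corresponds to a $\pm1$-valued alternating bicharacter. Such a bicharacter factors through the $\Bbb F_2$-vector space $G/2G$, and there it is an alternating bilinear form. Every alternating form on an $\Bbb F_2$-vector space is a sum of elementary forms $\chi\wedge\eta$ with $\chi,\eta\in K$. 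On the other hand, the commutator pairing of $j(\chi\smile\eta)$ is $(-1)^{\chi(g)\eta(h)+\eta(g)\chi(h)}$, which is the form $\chi\wedge\eta$. Hence $A[2]$ is generated by the classes $j(\chi\smile\eta)$, i.e.\ $A[2]=D(G)$.

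For (iii), I would exhibit a finite nonabelian $G$ with $\mathcal S\mathcal W(G)\ne A[2]$. The reduction is to analyze $j(w_2(X))$ for the irreducible summands of $X$, using \eqref{Whitneysumdegree2} to reduce modulo $D(G)$.
\begin{itemize}
\item Orthogonal lines contribute only to $D(G)$.
\item A realification $Y_{\Bbb R}$ of a complex-type representation $Y$ has $w_2=c_1(Y)\bmod 2$. The composite $H^2(G,\Bbb Z)\to H^2(G,\Bbb Z/2)\xrightarrow{j}H^2(G,\Bbb C^\times)$ is zero, because it factors through $H^2(G,\Bbb C)=0$. So these contribute nothing.
\end{itemize}
Thus $\mathcal S\mathcal W(G)$ is generated by $D(G)$ together with $j(w_2)$ of the real-type irreducibles of dimension $\ge 2$ and of the realifications of quaternionic-type irreducibles. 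The plan is to choose a group (for instance a small $2$-group, or a group with Schur multiplier having large $2$-part) where few such irreducibles exist, so that these generators span a proper subgroup of $A[2]$. One would then compute each $j(w_2)$ by its restrictions to abelian subgroups or by the explicit spin lift of the representation. Finding a clean, verifiable example is the main obstacle. I would first test groups whose only higher-dimensional irreducibles are of complex type, and look for a $2$-torsion class outside $D(G)$, detected by a nontrivial commutator on commuting pairs that no element of $D(G)$ produces.
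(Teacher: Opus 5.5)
Your parts (i) and (ii) are correct and follow the paper's argument. Part (i) is the same squaring computation via \eqref{modifiedKunneth} and \eqref{cupsquarezero}. In (ii) you use commutator pairings and alternating forms on $G/2G$ to justify the identity $H^2(G,\k^\times)[2]=\wedge^2H^1(G,\mathbb{Z}/2)$, i.e.\ $A[2]=D(G)$, which the paper simply asserts.

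Part (iii), however, is not proved. You describe a search strategy but never exhibit a group $G$ with $\mathcal S\mathcal W(G)\neq A[2]$. By (i), this existence statement is the entire content of (iii).

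Your observation about $H^2(G,\mathbb{Z})$ can be sharpened into an exact reformulation of what is needed:
\begin{itemize}
\item By the Kummer sequence for $z\mapsto z^2$ on $\mathbb{C}^\times$, the map $j$ sends $H^2(G,\mathbb{Z}/2)$ \emph{onto} $A[2]$.
\item Its kernel consists of the Kummer classes $c_1(\lambda)\bmod 2=w_2(\lambda_{\mathbb R})$ of characters $\lambda\colon G\to\mathbb{C}^\times$, which are themselves second Stiefel--Whitney classes.
\item Hence $\mathcal S\mathcal W(G)=A[2]$ if and only if $H^2(G,\mathbb{Z}/2)$ is spanned by the classes $w_2(X)$ of real representations $X$ of $G$.
\end{itemize}
So you need a finite group with a degree-two mod-$2$ class outside the span of all $w_2(X)$. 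This is a genuinely nontrivial input. The paper takes it from the type-$1$ split metacyclic groups in \cite[Theorem~2]{GKT} and then runs exactly this pullback argument through $j$.

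To close the gap you must either cite such a result, or carry out the computation for a specific group: compute $A[2]$, list its real-type irreducibles of dimension $\ge 2$, and compute $j(w_2)$ of each. As it stands, nothing guarantees that your proposed test class contains a group with $A[2]\neq D(G)$. (That class is groups whose nonlinear irreducibles are all of complex type, where $\mathcal S\mathcal W(G)=D(G)$.)

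A minor correction to your reduction: realifications of quaternionic-type irreducibles are also realifications of complex representations, in fact with trivial determinant. So they contribute nothing either, and modulo $D(G)$ only the real-type irreducibles of dimension $\ge 2$ matter.
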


\begin{proof}
(i) By \eqref{modifiedKunneth} and \eqref{cupsquarezero},
\[
(a,\chi)*(a,\chi)=(a^2,0),
\]
which proves the first equality. The second follows from Proposition~\ref{symwitt} and Proposition~\ref{SWproductstructure}, and gives the desired criterion.

(ii) If $G$ is abelian then $H^2(G,\k^\times)[2]=\wedge^2 H^1(G,\Bbb Z/2)$, so 
the map 
$$
j\circ \smile: H^1(G,\Bbb Z/2)\otimes H^1(G,\Bbb Z/2)\to H^2(G,\k^\times)[2]
$$ 
is surjective. Hence $L=A[2]\times K$ and thus $\iota$ is surjective. 

(iii) The type-$1$ split metacyclic groups occurring in the proof of \cite[Theorem~2, pp.~336--337]{GKT} have a degree-two class outside the subgroup generated by second Stiefel--Whitney classes. Its image under
\[
j:H^2(G,\Bbb Z/2)\longrightarrow H^2(G,\k^\times)[2]
\]
lies outside ${\mathcal S\mathcal W}(G)$. Indeed, if $j(x)$ belonged to ${\mathcal S\mathcal W}(G)$, then $x$ would differ from a sum of second Stiefel--Whitney classes by an element of $\ker j$; but $\ker j$ consists of Kummer classes of complex characters, which are themselves second Stiefel--Whitney classes of the underlying oriented real two-planes. Thus ${\mathcal S\mathcal W}(G)\subsetneq H^2(G,\mathbb C^\times)[2]$, and the result follows from (i).
\end{proof}

\end{document}